\newcommand{\INT}{\mathcal{I}}
\newcommand{\T}{\mathbb{T}}
\DeclareMathOperator{\interior}{int}
\newcommand{\Class}{\boldsymbol{A}}
\newcommand{\ClassC}{\boldsymbol{A}^{\circ}}
\newcommand{\BMO}{\mathrm{BMO}}
\newcommand{\eps}{\varepsilon}
\DeclareMathOperator{\cl}{cl}
\DeclareMathOperator{\conv}{conv}
\newcommand{\E}[1]{\mathbb{E}_{#1}}
\newcommand{\av}[2]{\langle {#1}\rangle_{{}_{#2}}}
\renewcommand{\leq}{\leqslant}
\renewcommand{\geq}{\geqslant}
\newcommand{\per}{\text{\tiny \textup{per}}}
\newcommand{\BG}{\mathfrak{B}}
\newcommand{\OmNull}{\Omega_{\mathrm{O}}}
\newcommand{\OmOne}{\Omega_{\mathrm{I}}}
\newcommand{\Om}{\Omega}
\newcommand{\tOm}{\tilde{\Om}}
\newcommand{\tOmOne}{\tilde{\Omega}_{\mathrm{I}}}
\DeclareMathOperator{\ch}{ch}
\newcommand{\OmConv}{\Om_{\mathrm{conv}}}
\newcommand{\Leb}{\mathfrak{L}}
\newcommand{\R}{\mathbb{R}}
\newcommand{\Z}{\mathbb{Z}}
\newcommand{\Set}[2]{\Big\{{#1}\;\Big|\,{#2}\Big\}}
\newcommand{\set}[2]{\{{#1}\;|\,{#2}\}}
\newcommand{\eq}[1]{\begin{equation}{#1}\end{equation}}
\newcommand{\alg}[1]{\begin{align}{#1}\end{align}}
\newcommand{\Eeqref}[1]{\stackrel{\scriptscriptstyle{\eqref{#1}}}{=}}
\newcommand{\F}{\mathfrak{S}}
\newtheorem{Le}{Lemma}[section]
\newtheorem{St}[Le]{Proposition}
\newtheorem{Th}[Le]{Theorem}
\newtheorem{Cor}[Le]{Corollary}
\newtheorem{Rem}[Le]{Remark}
\numberwithin{equation}{section}
\begin{document}
\author{Egor Dobronravov\thanks{Supported by the Russian Science Foundation grant 19-71-10023.} \and Dmitriy~Stolyarov\thanksmark{1} 
\and Pavel~Zatitskii}

\title{New Bellman induction and a weak version of $\BMO$}
\maketitle
\begin{abstract}
We enlarge the area of applicability of the Bellman function method to estimates in the spirit of the John--Nirenberg inequality abandoning certain convexity assumptions. As an application, we consider a characteristic of a function that is much smaller than the~$\BMO$ norm, but whose finiteness leads to the exponential integrability of the function.
\end{abstract}

\section{Introduction}\label{S1}

\subsection{Bellman induction on $\BMO$}\label{s11}
Let~$\varphi\colon [0,1]\to \R$ be a square summable function. Denote by~$\av{\varphi}{E}$ its average~$|E|^{-1}\int_E \varphi(x)\,dx$ over a Borel set~$E$ of finite non-zero Lebesgue measure~$|E|$. The quadratic~$\BMO([0,1])$ norm of~$\varphi$ is then defined by the formula
\eq{
\|\varphi\|_{\BMO}^2 = \sup\limits_{\genfrac{}{}{0pt}{-2}{I \text{\tiny\,- subinterval}}{\text{of }[0,1]}} \av{(\varphi - \av{\varphi}{I})^2}{I}.
} 
The famous John--Nirenberg inequality~\cite{JN1961} says that a function with finite~$\BMO$ norm is exponentially integrable. There are several sharp versions of this inequality (see~\cite{Dobronravov2023},~\cite{Korenovski1990},~\cite{Lerner2013},~\cite{SV2011}, \cite{SV2012},~\cite{SV2016}, and~\cite{VV2014}). The ones that include the specific quadratic norm above are usually proved by the so-called Bellman function method. The application of the method to this circle of problems is based on a splitting lemma that appeared for the first time in~\cite{Vasyunin2003} (see Lemma $4$ there) and was used in the search for sharp constants in the reverse H\"older inequalities for Muckenhoupt weights. Let us provide the formulation for the~$\BMO$ case.

Fix some function~$\varphi \in \BMO$. For any subinterval~$J \subset [0,1]$, consider a point~$x_J$ in~$\R^2$,
\eq{
x_J = \Big(\av{\varphi}{J},\av{\varphi^2}{J}\Big).
}
If~$\|\varphi\|_{\BMO} \leq \eps$, then~$x_J \in P_\eps$ for any~$J$, where
\eq{\label{ParabolicStrip}
P_\eps = \set{x\in \R^2}{x_1^2 \leq x_2 \leq x_1^2 + \eps^2}.
}
Note that the latter set, called a parabolic strip, is not convex.
\begin{Le}[Vasyunin's splitting lemma]\label{Lemma5}
For any~$\delta > 0$ and any~$\varphi$ with~$\|\varphi\|_{\BMO}\leq \eps$ there exists a splitting~$[0,1] = I_+\cup I_-$\textup,~$I_\pm$ being subintervals of~$[0,1]$ whose interiors do not intersect such that
\eq{
\big[x_{I_+},x_{I_-}\big]\subset P_{\eps + \delta}
}
and the splitting is regular\textup:~$|I_+|/|I_{-}| \in (\nu,\nu^{-1})$\textup, where~$\nu>0$ is a small number that depends on~$\eps$ and~$\delta$ only. 
\end{Le}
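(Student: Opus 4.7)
The plan is to consider a one-parameter family of two-piece splittings and select the right one by a continuity argument. For $t \in (0,1)$, set $I_+(t) = [0,t]$, $I_-(t) = [t,1]$, and write $A(t) = x_{I_+(t)}$, $B(t) = x_{I_-(t)}$. The hypothesis $\|\varphi\|_{\BMO} \le \eps$ gives $A(t), B(t) \in P_\eps$, and the averaging identity $m := x_{[0,1]} = tA(t) + (1-t)B(t)$ shows that the chord $[A(t), B(t)]$ passes through the fixed point $m \in P_\eps$ for every $t$.

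The key computation concerns the auxiliary function $u(x) = x_2 - x_1^2$, whose sublevel set $\{0 \le u \le \eps^2\}$ is exactly $P_\eps$. A direct expansion yields, for every chord $[A,B]$ and every $s \in [0,1]$,
\[
u(sA + (1-s)B) = s\, u(A) + (1-s)\, u(B) + s(1-s)(a_1 - b_1)^2,
\]
a concave parabola in $s$. Hence the chord lies in $P_{\eps+\delta}$ precisely when the maximum of this expression is at most $(\eps+\delta)^2$. Evaluating the identity at $s = t$ and using $u(m) \le \eps^2$ yields the quantitative bound $t(1-t)(a_1(t) - b_1(t))^2 \le \eps^2$, which keeps the horizontal extent of the chord under control whenever $t$ stays away from $0$ and $1$.

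The main step is to extract a single $t \in (\nu, 1-\nu)$ whose chord lies inside $P_{\eps+\delta}$. I would argue by contradiction: if every such $t$ yielded a chord whose maximum of $u$ exceeds $(\eps + \delta)^2$, then since $u(A(t)), u(B(t)) \le \eps^2$, the interior maximum of the parabola must do the overshoot, and a short manipulation forces $(a_1(t) - b_1(t))^2$ to stay quantitatively bounded away from zero uniformly in $t \in [\nu, 1-\nu]$. Combined with the boundary behavior — Lebesgue differentiation sends $A(t)$ to $(\varphi(0), \varphi(0)^2)$ as $t \to 0^+$ and $B(t)$ to $(\varphi(1), \varphi(1)^2)$ as $t \to 1^-$, both landing on the lower parabola $\{u = 0\}$ — an intermediate-value argument applied to the continuous curve $t \mapsto (A(t), B(t))$ either produces the contradiction or yields the desired $t$ directly.

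The hard part is the quantitative dependence of $\nu$ on $\eps, \delta$ alone. Soft continuity arguments are likely enough to prove \emph{existence} of some admissible $t$, but pinning $\nu$ to a function of $(\eps, \delta)$ independent of $\varphi$ requires controlling the worst-case behavior of the curve $t \mapsto (A(t), B(t))$ uniformly over all $\varphi$ with $\|\varphi\|_\BMO \le \eps$. I expect this to reduce either to a compactness argument on the configuration space of possible pairs $(A,B)$ compatible with the constraints above, or to an iterative refinement in which a splitting that overshoots slightly is replaced by a finer one on the offending piece, with termination guaranteed after a quantitatively controlled number of steps.
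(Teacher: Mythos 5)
The paper itself does not prove Lemma~\ref{Lemma5}; it quotes it from Vasyunin's 2003 paper (Lemma~4 there). The nearest argument in the paper for comparison is Case~A of the proof of Lemma~\ref{EvenNewer5}, which uses the same one-parameter family $I_\pm(t)$, but via a \emph{one-sided sweep} (find the first $t_0$ at which one half of the chord clears the forbidden region), not a uniform-in-$t$ contradiction.

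Your setup and computation are correct: with $u(x)=x_2-x_1^2$ one has
\[
u\big(sA+(1-s)B\big) = s\,u(A)+(1-s)\,u(B)+s(1-s)(a_1-b_1)^2,
\]
hence $t(1-t)\big(a_1(t)-b_1(t)\big)^2\le u(m)\le\eps^2$. The gap is in the main step. You want: ``every $t\in[\nu,1-\nu]$ overshoots'' forces a contradiction. An overshoot at $t$ does force $\big(a_1(t)-b_1(t)\big)^2\ge c$ with $c=4\big((\eps+\delta)^2-\eps^2\big)$ (endpoints have $u\le\eps^2$ and the chord max is $\le\max(u(A),u(B))+\tfrac14(a_1-b_1)^2$), and combined with the constraint this gives $t(1-t)\le\eps^2/c$. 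But this is vacuous unless $c>4\eps^2$, i.e.\ unless $\delta>(\sqrt2-1)\eps$. For small $\delta$ --- the regime that matters --- the chord at $t=1/2$ can genuinely overshoot (its $u$-maximum can be as large as $2\eps^2$), so no contradiction is available, and the limits $A(t)\to(\varphi(0),\varphi(0)^2)$ do not rescue it: $0$ need not be a Lebesgue point, the limiting chord $[\psi(0),m]$ may itself overshoot, and $t$ near the endpoints is exactly what the regularity clause forbids.

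The correct route is the sweep. The open region $F=\set{x}{x_2>x_1^2+(\eps+\delta)^2}$ is convex and $m\notin\overline F$, so the line of the chord meets $F$ (if at all) in an interval lying entirely on one side of $m$; say the $[x_+,m]$ side at $t=1/2$. Increase $t$ to the first $t_0$ at which $[x_+(t_0),m]$ misses $F$ (such a $t_0<1$ exists since $x_+(t)\to m$ as $t\to1$); convexity keeps $[m,x_-(t_0)]$ clear, so the whole chord at $t_0$ lies in $P_{\eps+\delta}$. To bound $t_0$: at $t_0$ the segment $[m,x_+(t_0)]$ touches $\partial F$, so its $u$-maximum equals $(\eps+\delta)^2$, and your parabola identity applied on that sub-segment (squared horizontal extent $(1-t_0)^2(a_1-b_1)^2$, endpoints with $u\le\eps^2$) gives $(1-t_0)^2(a_1-b_1)^2\ge 4\big((\eps+\delta)^2-\eps^2\big)$. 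Dividing by $t_0(1-t_0)(a_1-b_1)^2\le\eps^2$ yields $\frac{t_0}{1-t_0}\le\frac{\eps^2}{4\big((\eps+\delta)^2-\eps^2\big)}$, a bound in $\eps,\delta$ alone, which supplies $\nu$; no compactness or iterative refinement over configurations is needed.
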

Here and in what follows, by writing~$[P,Q]$ we mean the segment connecting the points~$P$ and~$Q$. By the additivity of the integral,~$x_{[0,1]} \in \big[x_{I_+},x_{I_-}\big]$. Note that the endpoints of the latter segment lie in~$P_\eps$. However, since this domain is not convex, this does  not imply the whole segment lies inside~$P_\eps$. The lemma says one can find a splitting for which it almost does. The John--Nirenberg inequality may be proved by inductive application of the lemma with the help of a certain function on~$P_\eps$. We will sketch this argument in Subsection~\ref{s22}.

The starting point of our research was to find a substitute for Vasyunin's lemma that might work for~$\BMO$ on a square (this might have applications to better bounds for the exponential integrability of~$\BMO$ functions in higher dimension, where the sharp constants are not known, see~\cite{CSS2011} for this). Though our progress in this direction is scant, we managed to find a strengthening of the lemma that is, though still being highly one-dimensional, provides some new information on~$\BMO$-type functions and estimates in the spirit of John and Nirenberg. We will describe this new application in Subsection~\ref{s13} and now turn to the short survey of some results cited above.

\subsection{$\BMO$ and the John--Nirenberg inequality}\label{s12}
The John--Nirenberg inequality in its integral form says there exists~$\eps_0> 0$ such that for any~$\eps \in (0,\eps_0)$ 
\eq{
\av{e^{\varphi-\av{\varphi}{[0,1]}}}{[0,1]}\leq C(\eps),\qquad\text{provided}\quad \|\varphi\|_{\BMO([0,1])} \leq \eps.
}
Here~$C(\eps)$ is a constant that does not depend on the particular choice of~$\varphi$. In~\cite{SV2011}, Slavin and Vasyunin found the sharp values of~$\eps_0$ and~$C(\eps)$.
\begin{Th}[Slavin, Vasyunin, 2011]\label{IntegralJNInterval}
The inequality
\eq{
\av{e^{\varphi-\av{\varphi}{[0,1]}}}{[0,1]} \leq \frac{e^{-\eps}}{1-\eps},\qquad\text{provided}\quad \|\varphi\|_{\BMO([0,1])} \leq \eps,
}
is true\textup, sharp\textup, and attainable for any~$\eps \in [0,1)$.
\end{Th}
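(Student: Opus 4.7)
The plan is to deploy the Bellman function method on the parabolic strip~$P_\eps$ with Vasyunin's splitting lemma (\cref{Lemma5}) as the induction step. Since~$\av{e^{\varphi-\av{\varphi}{I}}}{I}$ depends on~$\varphi|_I$ only through~$x_I$, and in fact only through~$t=x_2-x_1^2$ by translation invariance, I would take the Bellman candidate
\[
B_\eps(x_1,x_2)\ =\ e^{x_1}\,G(x_2-x_1^2),\qquad G(t)=\frac{e^{-\sqrt{t}}}{1-\sqrt{t}},\ t\in[0,\eps^2].
\]
The function~$G$ is smooth and increasing on~$[0,1)$ with~$G(0)=1$ (so~$B_\eps$ agrees with the target~$e^\varphi$ on the parabolic boundary~$x_2=x_1^2$, which is the locus of constant functions) and~$G(\eps^2)=\tfrac{e^{-\eps}}{1-\eps}$ (the putative sharp constant on the upper boundary). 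Its form is dictated by the relevant degenerate Monge--Amp\`ere equation; for the proof we only need to verify its properties after the fact.

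The main analytic ingredient is that~$B_\eps$ is concave along every chord~$[P,Q]\subset P_\eps$: for any such~$P,Q$ and any~$\lambda\in[0,1]$,
\[
B_\eps\bigl(\lambda P+(1-\lambda)Q\bigr)\ \ge\ \lambda B_\eps(P)+(1-\lambda)B_\eps(Q).
\]
After the substitution~$t=x_2-x_1^2$ this reduces to a one-variable inequality for~$G$, verified by direct computation on the restricted set of directions compatible with staying inside~$P_\eps$. Note that~$B_\eps$ is \emph{not} concave along arbitrary chords with endpoints in~$P_\eps$: the domain is non-convex, many such chords leave~$P_\eps$, and along them the chord inequality may fail. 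This is precisely the obstruction which Vasyunin's lemma is designed to bypass.

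With these inputs in place, fix~$\varphi$ with~$\|\varphi\|_{\BMO}\le\eps$ and a small~$\delta>0$ and iterate \cref{Lemma5} starting from~$[0,1]$. At each step the segment~$[x_{I_+},x_{I_-}]$ lies in~$P_{\eps+\delta}$, so the chord inequality applied to~$B_{\eps+\delta}$ yields
\[
B_{\eps+\delta}(x_I)\ \ge\ \tfrac{|I_+|}{|I|}\,B_{\eps+\delta}(x_{I_+})+\tfrac{|I_-|}{|I|}\,B_{\eps+\delta}(x_{I_-}).
\]
Iterating through~$n$ generations produces a partition of~$[0,1]$ whose parts have diameters tending to~$0$ by the regularity~$|I_+|/|I_-|\in(\nu,\nu^{-1})$. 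The Lebesgue differentiation theorem gives~$x_{J_n(x)}\to(\varphi(x),\varphi(x)^2)$ for a.e.~$x$, and since~$B_{\eps+\delta}$ restricted to the parabola equals~$e^{x_1}$, the limit reads~$B_{\eps+\delta}(x_{[0,1]})\ge\av{e^\varphi}{[0,1]}$. Sending~$\delta\to0$, and using that~$x_{[0,1]}\in P_\eps$ together with monotonicity of~$G$, yields~$\av{e^\varphi}{[0,1]}\le e^{\av{\varphi}{[0,1]}}G(\eps^2)$, which is the claimed inequality. Sharpness is then witnessed by an explicit logarithmic~$\varphi_\eps$ whose trajectory~$t\mapsto x_{[0,t]}$ traces a line along which~$B_\eps$ is affine.

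The main obstacle is the non-convexity of~$P_\eps$: the standard Bellman technology demands concavity along all chords, which fails here. Vasyunin's lemma sidesteps this by allowing the chord to exit~$P_\eps$ by a controlled amount~$\delta$, but the price is that the chord inequality must be established for the~$\delta$-enlarged strip and that the error introduced by replacing~$\eps$ by~$\eps+\delta$ must survive the entire inductive scheme without degrading. Verifying the restricted concavity of~$B_\eps$ and showing that the~$\delta$-error can be killed uniformly in the number of generations is the delicate core of the argument.
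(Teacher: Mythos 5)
Your overall strategy---Bellman induction over the parabolic strip~$P_\eps$ driven by Vasyunin's splitting lemma, with the $\delta$-enlargement killed at the end by continuity in~$\eps$---is precisely the scheme the paper sketches in Subsection~\ref{s22} and the one Slavin and Vasyunin use. The problem is in the Bellman candidate itself: the function you wrote down is not locally concave on~$P_\eps$, so the chord inequality you need at the induction step is false for it.

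Concretely, with~$t=x_2-x_1^2$ and~$s=\sqrt t$, your~$G(t)=e^{-s}/(1-s)$ satisfies
\begin{equation*}
G'(t)=\frac{e^{-s}}{2(1-s)^2},\qquad
G''(t)=\frac{e^{-s}(1+s)}{4s(1-s)^3}>0\quad\text{for }0<s<1,
\end{equation*}
so~$\partial_{x_2}^2 B_\eps = e^{x_1}G''(x_2-x_1^2)>0$ throughout the interior of the strip. Vertical segments~$\{x_1=\mathrm{const}\}$ lie in~$P_\eps$ and are short, so this is not a ``forbidden direction'' one can exclude: local concavity forces the Hessian to be negative semi-definite at every interior point, and yours has a strictly positive diagonal entry. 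The candidate fails before any splitting lemma is applied. Matching~$G(0)=1$ and~$G(\eps^2)=e^{-\eps}/(1-\eps)$ does not pin down~$G$; the extremal-line structure (tangents to the \emph{upper} parabola) forces the dependence to run through~$\sqrt{\eps^2-t}$, not~$\sqrt t$. The Slavin--Vasyunin function is
\begin{equation*}
\BG_\eps(x_1,x_2)=\frac{1-\sqrt{\eps^2-(x_2-x_1^2)}}{1-\eps}\,
e^{\,x_1-\eps+\sqrt{\eps^2-(x_2-x_1^2)}} .
\end{equation*}
Writing~$r=\sqrt{\eps^2-(x_2-x_1^2)}$, one checks~$\BG_\eps=e^{x_1}$ on~$r=\eps$ (the lower parabola), $\BG_\eps(0,\eps^2)=e^{-\eps}/(1-\eps)$, that~$\BG_\eps$ is affine along the left tangent segments to the upper parabola (which foliate~$P_\eps$), and that the Hessian has
$\partial_{x_1}^2\BG_\eps=-\frac{(r+x_1)^2}{r(1-\eps)}e^{x_1-\eps+r}\le 0$, $\partial_{x_2}^2\BG_\eps=-\frac{1}{4r(1-\eps)}e^{x_1-\eps+r}<0$, and vanishing determinant---so it is negative semi-definite, i.e., $\BG_\eps$ is locally concave. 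With this candidate your remaining steps (iterating Lemma~\ref{Lemma5} with~$B_{\eps+\delta}$, passing to the a.e.\ limit by the Lebesgue differentiation theorem, then~$\delta\to0$) go through as you describe, and the sharpness is witnessed along the foliation exactly as you indicate.
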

See~\cite{Dobronravov2023} for a similar sharp estimate of the quantity~$\av{e^{|\varphi-\av{\varphi}{}|}}{}$ and~\cite{VV2014} for related weak-type estimates. It was shown in~\cite{SZ2021}   that the same estimates are true (this is simple) and sharp (not that simple) for functions on the circle and the line. Let~$\T$ be the circle of radius~$(2\pi)^{-1}$, which we identify with~$\R/\Z$. A function~$\varphi \colon \T \to \R$ may then be naturally identified with its periodic version~$\varphi_\per \colon \R \to \R$ via the formula~$\varphi_\per(t) = \varphi((2\pi)^{-1} e^{2\pi i t})$. Define the~$\BMO$ norm of a function on the circle by the formula
\eq{
\|\varphi\|_{\BMO(\T)} = \|\varphi_\per\|_{\BMO(\R)} =  \sup\limits_{I - \text{\tiny{interval}}} \av{(\varphi_\per - \av{\varphi_\per}{I})^2}{I}.
}
\begin{Th}[Stolyarov, Zatitskii, 2021]
The inequality
\eq{
\av{e^{\varphi-\av{\varphi}{\T}}}{\T} \leq \frac{e^{-\eps}}{1-\eps},\qquad\text{provided}\quad \|\varphi\|_{\BMO(\T)} \leq \eps,
}
is true and sharp for any~$\eps \in [0,1)$.
\end{Th}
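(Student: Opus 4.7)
The inequality itself is an immediate consequence of Theorem~\ref{IntegralJNInterval}: by definition, $\|\varphi\|_{\BMO(\T)} = \|\varphi_\per\|_{\BMO(\R)}$, and the interval $[0,1]\subset\R$ participates in the supremum, so $\|\varphi_\per|_{[0,1]}\|_{\BMO([0,1])} \le \eps$. Under the identification of $\T$ with $[0,1]$, the averages $\av{\varphi}{\T}$ and $\av{e^\varphi}{\T}$ agree with the corresponding averages over $[0,1]$. Applying Theorem~\ref{IntegralJNInterval} to $\varphi_\per|_{[0,1]}$ gives exactly the desired bound.

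For sharpness I would exhibit a sequence of functions $\varphi_\delta$ on $\T$ with $\|\varphi_\delta\|_{\BMO(\T)} \le \eps$ whose exponential averages approach $e^{-\eps}/(1-\eps)$. The starting point is the explicit family of interval extremizers $\psi_\eps$ on $[0,1]$ delivered by (the sharpness half of) Theorem~\ref{IntegralJNInterval}. The naive idea is to view $\psi_\eps$ as a function on $\T$ via the identification $\T \cong [0,1]$, but this fails in general because the $\BMO(\T)$ norm involves the periodic extension on all of~$\R$, and intervals straddling the point $0\sim 1$ can push the norm above $\eps$. The plan is to modify $\psi_\eps$ in a small neighbourhood of the gluing point: for $\delta \to 0^+$, rescale a copy of $\psi_\eps$ to live on $[0, 1-\delta]\subset\T$, and on the gluing arc $[1-\delta, 1]$ insert a monotone interpolation between the one-sided limits $\psi_\eps(1-)$ and $\psi_\eps(0+)$. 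Since only a set of measure~$\delta$ is touched, both the mean of $\varphi_\delta$ and the exponential mean shift by $O(\delta)$, so the target ratio is preserved in the limit $\delta\to 0$.

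The main obstacle is verifying that $\|\varphi_\delta\|_{\BMO(\T)}\le\eps$. This reduces to bounding the variance of (the periodic extension of) $\varphi_\delta$ on intervals $I\subset\R$ that straddle a copy of the interpolation arc. The key input is the explicit form of $\psi_\eps$ — in particular, its boundary asymptotics at $0$ and $1$ — together with a careful choice of the interpolation so that for every such straddling $I$, its variance either is dominated by the variance over a subinterval of a single copy of $[0,1]$ (hence~$\le \eps^2$) or exceeds that value by only $O(\delta)$. A final small affine renormalization (passing from $\eps$ to $\eps-o(1)$) absorbs the $O(\delta)$ slack in the $\BMO$ norm, leaving the extremal ratio intact in the limit. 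Controlling the straddling variances via the fine pointwise behaviour of $\psi_\eps$ is the step I expect to be the genuine technical obstacle; without the explicit form of the interval extremizer this route would not close.
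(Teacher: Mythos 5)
This theorem is not proved in the paper at all: it is cited as a known result from~\cite{SZ2021}, with the remark in Subsection~\ref{s12} that the inequality itself is ``simple'' but the sharpness is ``not that simple.'' So there is no in-paper proof to compare against; what follows is an assessment of whether your blind proposal would actually close.

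Your argument for the inequality is correct and is exactly the ``simple'' observation the paper alludes to: restriction to~$[0,1]$ only decreases the supremum defining the norm, and the averages over~$\T$ agree with those over~$[0,1]$.

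The sharpness half has a genuine gap. The Slavin--Vasyunin interval extremizer is (up to an affine change) $\psi_\eps(t)=-\eps\log t$, which is unbounded: $\psi_\eps(0+)=+\infty$. Your plan calls for ``a monotone interpolation between the one-sided limits $\psi_\eps(1-)$ and $\psi_\eps(0+)$'' on the gluing arc, but the second limit is infinite, so no such interpolation (and no summable function realizing it) exists, and the subsequent claim that ``both the mean of~$\varphi_\delta$ and the exponential mean shift by $O(\delta)$'' has no content. Even if you replace the extremizer by a truncation at level~$M$, the issue you flag as a technical obstacle becomes fatal: an interval of length $2\delta_0$ straddling the gluing point sees values near~$0$ on one side and near~$M$ on the other, producing a variance on the order of~$M^2$, so the~$\BMO(\T)$ norm blows up as~$M\to\infty$. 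The overshoot is not an $o(1)$ effect absorbable by ``a final small affine renormalization.'' A symmetric periodization $-\eps\log|t|$ avoids the infinite interpolation problem but still overshoots: a direct computation of the variance of $-\eps\log|t|$ over $[-a,b]$ gives $\eps^2\bigl(1+\tfrac{ab}{(a+b)^2}(\log a-\log b)^2\bigr)$, whose supremum over $a,b$ exceeds $\eps^2$ by a fixed constant factor (maximized at $|\log(a/b)|\approx 2.4$). So naively periodizing or mildly regularizing the interval extremizer does not stay inside the admissible class, and the difficulty is structural, not merely a matter of patience. The actual route used in~\cite{SZ2021}, which the paper hints at in the proof of Corollary~\ref{ApplicationCircle}, is different in kind: one constructs a circle function $\tilde\psi\in\ClassC_{\tOm}$ with a slightly shrunk obstacle $\tOmOne$ whose Bellman curve spirals so as to achieve $\av{e^{\mu\tilde\psi_1}}{\T}\geq\BG_{\tOm,e^{\mu\cdot}}(\av{\tilde\psi}{\T})$, and then lets the shrinking parameter go to zero. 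Without this (or some equally non-trivial replacement), your route does not close.
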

There are many versions of~$\BMO$. For example, a dyadic version is quite popular:
\eq{
\|\varphi\|_{\BMO^{\mathrm{dyad}}}^2 = \sup\limits_{I \in \mathbb{D}}  \av{(\varphi - \av{\varphi}{I})^2}{I},
} 
where~$\mathbb{D}$ is the collection of all dyadic subintervals of~$[0,1]$. The functions in this class have slightly worse summability properties than the functions in the classical~$\BMO$ space.
\begin{Th}[Slavin, Vasyunin, 2011]
The inequality
\eq{
\av{e^{\varphi-\av{\varphi}{[0,1]}}}{[0,1]} \leq \frac{e^{-\frac{\eps}{\sqrt 2}}}{2-e^{\frac{\eps}{\sqrt{2}}}},\qquad\text{provided}\quad \|\varphi\|_{\BMO^{\mathrm{dyad}}([0,1])} \leq \eps,
}
is true\textup, sharp\textup, and attainable for any~$\eps \in [0, \sqrt{2}\log 2)$.
\end{Th}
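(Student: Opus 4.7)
The proof proceeds by the Bellman function method, which for the dyadic problem is especially clean because halving $[0,1]$ gives a canonical $1{:}1$ splitting and no analogue of Vasyunin's lemma is needed. Define on the parabolic strip $P_\eps$ of~\eqref{ParabolicStrip}
\eq{
\Bell_\eps(x) := \sup\Set{\av{e^{\varphi}}{[0,1]}}{x_{[0,1]} = x,\ \|\varphi\|_{\BMO^{\mathrm{dyad}}([0,1])} \leq \eps}.
}
Restricting an admissible $\varphi$ to the two dyadic halves of $[0,1]$ and rescaling yields two new admissible candidates with averages $x^\pm$, producing the midpoint concavity relation
\eq{
\Bell_\eps(x) \geq \tfrac12\big(\Bell_\eps(x^+) + \Bell_\eps(x^-)\big)
}
whenever $x^\pm \in P_\eps$ and their midpoint $x = (x^+ + x^-)/2$ also lies in $P_\eps$; on the lower parabola $\{x_2 = x_1^2\}$ only $\varphi \equiv x_1$ is admissible, giving the boundary value $\Bell_\eps(x) = e^{x_1}$. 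Conversely, iterating midpoint concavity along dyadic subdivisions (and passing to the limit via martingale convergence for $\E{\mathcal F_n}[\varphi]$) shows that any continuous function on $P_\eps$ satisfying these two properties dominates $\Bell_\eps$.

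Motivated by the target constant and by the critical threshold $\eps_0 = \sqrt 2\log 2$ at which the denominator vanishes, I propose the candidate
\eq{
B_\eps(x) := \frac{e^{x_1}}{t(x)\big(2 - t(x)\big)}, \qquad t(x) := \exp\!\bigg(\frac{\sqrt{x_2 - x_1^2}}{\sqrt 2}\bigg).
}
This is well-defined on $P_\eps$ exactly when $\eps < \sqrt 2 \log 2$, equals $e^{x_1}$ on the lower parabola, and satisfies $B_\eps \geq e^{x_1}$ throughout $P_\eps$ (since $t(2-t) \leq 1$ for $t \in [1, 2)$). The \emph{main obstacle} is to verify midpoint concavity of $B_\eps$ on admissible triples. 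Writing $x^\pm_1 = x_1 \pm \alpha$ and $u_\pm = \sqrt{x^\pm_2 - (x^\pm_1)^2}$, $u = \sqrt{x_2 - x_1^2}$, the midpoint relation forces the identity $u^2 = \alpha^2 + (u_+^2 + u_-^2)/2$, and dividing by $e^{x_1}$ the required inequality becomes a two-parameter estimate coupling $\cosh\alpha$ with the values $1/[t(u_\pm)(2 - t(u_\pm))]$. The natural strategy is to fix $x$ and maximize the right-hand side over all admissible $(x^+, x^-)$: a Lagrange-multiplier computation should show that the extremum is attained on the boundary of the admissible region, namely when one of $x^\pm$ lies on the lower parabola ($u_\pm = 0$). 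The resulting one-variable inequality can then be verified by direct calculation; a Taylor expansion at $u = 0$ shows equality of zeroth-, first-, and second-order terms and the correct sign of the leading remainder on $[0, \sqrt 2 \log 2)$.

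Granted the majorization $\Bell_\eps \leq B_\eps$, one has
\eq{
\av{e^{\varphi - \av{\varphi}{[0,1]}}}{[0,1]} = e^{-\av{\varphi}{[0,1]}}\av{e^\varphi}{[0,1]} \leq e^{-\av{\varphi}{[0,1]}} B_\eps(x_{[0,1]}) = \frac{1}{t(x_{[0,1]})\big(2 - t(x_{[0,1]})\big)}.
}
The substitution $s := t(x_{[0,1]}) \in [1, e^{\eps/\sqrt 2}]$ makes $s(2-s)$ strictly decreasing on $[1,2)$, so $1/[s(2-s)]$ is strictly increasing, and its supremum over the admissible range is attained at $s = e^{\eps/\sqrt 2}$, giving precisely the claimed bound $e^{-\eps/\sqrt 2}/(2 - e^{\eps/\sqrt 2})$. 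Sharpness is exhibited by explicit dyadic extremizers: a sequence $\varphi_n$, each constant on the dyadic intervals of generation $n$, whose averages saturate equality in the midpoint concavity of $B_\eps$ at every split and for which $x_{[0,1]}$ lies on the upper parabola $\{x_2 = x_1^2 + \eps^2\}$; passing $n \to \infty$ produces equality in the limit.
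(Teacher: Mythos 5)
The paper does not prove this theorem: it is quoted verbatim from Slavin and Vasyunin~\cite{SV2011} in the survey Subsection~\ref{s12}, so there is no proof of the paper's to compare against. Your overall strategy --- a midpoint-concave majorant on the parabolic strip with boundary data $e^{x_1}$, followed by Bellman induction over dyadic subdivisions --- is indeed the correct framework and the one used in~\cite{SV2011}.

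However, your candidate function is wrong: $B_\eps(x)=e^{x_1}/\big[t(x)(2-t(x))\big]$ with $t(x)=\exp(\sqrt{x_2-x_1^2}/\sqrt2)$ is \emph{not} dyadically midpoint concave on $P_\eps$. Take the vertical split $x^-=(0,0)$, $x^+=(0,2u^2)$, whose midpoint is $x=(0,u^2)$; all three lie in $P_\eps$ once $u\sqrt2\le\eps$. Then $B_\eps(x^-)=1$, $B_\eps(x^+)=1/[e^{u}(2-e^{u})]$ and $B_\eps(x)=1/[e^{u/\sqrt2}(2-e^{u/\sqrt2})]$, and the required inequality
\[
\frac{1}{e^{u/\sqrt2}\big(2-e^{u/\sqrt2}\big)}\;\ge\;\frac12\Big(\frac{1}{e^{u}(2-e^{u})}+1\Big)
\]
fails for all small $u>0$: both sides agree to order $u^2$, but the difference (left minus right) is $(1-\sqrt2)u^3+O(u^4)<0$. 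Numerically, at $u=0.3$ and $\eps=0.5$ one gets $1.059$ versus $1.070$. This also shows your Lagrange-multiplier heuristic is incorrect --- the binding splits are \emph{not} those with an endpoint on the lower parabola, but precisely the vertical ones with $a=0$. Your formula matches the true dyadic Bellman function at $u=0$ and $u=\eps$ (which is why the final constant comes out right), but is strictly too small for $0<u<\eps$, and the genuine dyadic Bellman function is $\eps$-dependent in the interior, so cannot be given by an $\eps$-free closed form like yours. The sharpness construction via the self-similar extremizer is essentially correct and reproduces the right constant, but the upper-bound half of your proof has a hole that a Taylor check at the origin does not repair, since the third-order term already has the wrong sign.
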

While the condition~$\|\varphi\|_{\BMO^{\mathrm{dyad}}([0,1])} \leq \eps$ is easier to verify than~$\|\varphi\|_{\BMO([0,1])} \leq \eps$, the exponential summability that it  guarantees, is worse:~$\sqrt{2}\log 2 \approx 0.98 < 1$. The forthcoming subsection suggests a version of a weak~$\BMO$ norm, which is easier to compute than the usual~$\BMO$ norm and which leads to almost the same John--Nirenberg inequality.

\subsection{A weak version of $\BMO$}\label{s13}
The idea is to discretize the image of the function, not its domain. Fix a number~$\lambda > 0$ and define the quantity
\eq{\label{StrangeBMO}
[\varphi]_{\BMO([0,1])} = \sup\Set{\av{(\varphi - \av{\varphi}{I})^2}{I}}{I \text{ is a subinterval of } [0,1] \text{ and } \av{\varphi}{I} \in \lambda \Z}.
}
In other words, we restrict the variance of the function only over those intervals whose averages are integer (times~$\lambda$). We are ready to formulate a corollary of our main results.
\begin{Cor}\label{ApplicationInterval}
Fix~$\eps > 0$ and~$\lambda > 0$. Let~$\varphi \colon [0,1]\to \mathbb{R}$ be a square summable function such that~$[\varphi]_{\BMO}< \eps$ and 
\eq{\label{MeanCondition}
\av{|\varphi - \av{\varphi}{[0,1]}|^2}{[0,1]} < \eps^2.
} 
Then~$e^{\mu \varphi}$ is summable\textup, provided
\eq{\label{SummabilityCondition}
\mu < \frac{1}{\lambda}\log\Bigg(\frac{\phantom{-}1+ \sqrt{1 + \frac{4\eps^2}{\lambda^2}}}{-1 + \sqrt{1 + \frac{4\eps^2}{\lambda^2}}}\Bigg).
}
This bound is sharp in the sense that there exists~$\varphi$ satisfying the requirements for which~$e^{(\mu+\delta) \varphi}$ is not summable for the limiting value of~$\mu$ and any~$\delta > 0$.
\end{Cor}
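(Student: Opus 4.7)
The plan is to reduce the corollary to the paper's main Bellman induction theorem (the non-convex upgrade of the Vasyunin-type framework sketched in Subsection~\ref{s11}). Under the hypotheses $[\varphi]_{\BMO}<\eps$ and \eqref{MeanCondition}, the Bellman point $x_I=(\av{\varphi}{I},\av{\varphi^2}{I})$ of every sub-interval $I\subset[0,1]$ lies in the non-convex domain
\[
\Omega=\{(x_1,x_2)\in\R^2:x_1^2\le x_2\}\setminus\bigcup_{n\in\Z}\{(n\lambda,x_2):x_2>(n\lambda)^2+\eps^2\};
\]
the excised vertical slits encode precisely the weak $\BMO$ constraint (which only restricts intervals whose average is in $\lambda\Z$), while \eqref{MeanCondition} puts the seed point $x_{[0,1]}$ inside the classical parabolic strip $P_\eps\subset\Omega$. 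This is exactly the type of non-convex domain the paper is designed to handle, so it remains to exhibit a suitable majorant on $\Omega$ and invoke the main theorem.

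Next, I would search for a Bellman function $M\colon\Omega\to(0,+\infty)$ that dominates $e^{\mu x_1}$ on the Cauchy--Schwarz parabola $\{x_2=x_1^2\}$ and satisfies the induction hypothesis of the main theorem. The discrete translation symmetry $x_1\mapsto x_1+\lambda$ of the slit pattern suggests a piecewise-multiplicative Ansatz, in which $M$ is multiplied by a fixed factor $q=e^{\mu\lambda}$ each time $x_1$ is shifted by $\lambda$, and the cell models glue together at the slit endpoints $(n\lambda,(n\lambda)^2+\eps^2)$. The gluing compatibility reduces, after a short calculation, to the quadratic relation $(q-1)^2\eps^2=q\lambda^2$, equivalently $2\eps\sinh(\mu\lambda/2)=\lambda$, which is precisely the equation whose solution appears in~\eqref{SummabilityCondition}. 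For $\mu$ strictly below this threshold the Ansatz produces an everywhere-finite $M$, and the main theorem then yields $\av{e^{\mu\varphi}}{[0,1]}\le M(x_{[0,1]})<+\infty$, proving summability.

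For the sharpness claim, I would build an extremizer by a self-similar iteration: split $[0,1]$ into two sub-intervals in the proportions dictated by the extremal split of $\Omega$ at the slit endpoint $(n\lambda,(n\lambda)^2+\eps^2)$, assign the appropriate constant value on one piece, and recursively paste a rescaled copy of the same pattern on the complement. The geometric ratios of this construction will force the level-set measures $|\{\varphi\approx n\lambda\}|$ to decay exactly like $q^{-n}$, so that $\int_0^1 e^{(\mu+\delta)\varphi}\,dx=+\infty$ for every $\delta>0$ at the critical $\mu$. The principal technical obstacle is the verification of the induction condition for $M$ at the excised slits: standard locally-concave arguments fail because $\Omega$ is non-convex there, so one must use the full strength of the paper's new splitting lemma to show that the segments $[x_{I_+},x_{I_-}]$ that cross a slit do not spoil the inequality. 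A secondary delicate point is checking that the candidate extremizer is genuinely admissible, i.e.\ that its own sub-intervals with integer averages do not accidentally overshoot the variance bound $\eps^2$; this boils down to choosing the self-similar rescaling parameters so that the recursion lands exactly on the slit endpoints at each stage.
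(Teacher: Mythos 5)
Your proposal follows essentially the same route as the paper: invoke Theorem~\ref{Th2} on the slit domain~\eqref{DomainForStrangeBMO} with boundary data $f(x)=e^{\mu x_1}$, use the translation-multiplicative structure (the paper's homogeneity identity~\eqref{Homogeneity}) to compute the minimal locally concave majorant on $\OmConv$, read off the threshold from the gluing condition at the slit endpoints (your relation $(q-1)^2\eps^2=q\lambda^2$ is exactly the point where the paper's denominator vanishes, hence equivalent to~\eqref{SummabilityCondition}), and build the extremizer by the same self-similar geometric-ratio construction that the paper writes out explicitly. The one small mischaracterization is your ``principal technical obstacle'': once Theorem~\ref{Th2} is in hand, the Bellman function only needs to be locally concave on the convexified domain $\OmConv$ (a routine check the paper leaves to the reader) --- the splitting-lemma machinery is internal to the proof of Theorem~\ref{Th2} and does not need to be re-invoked at the level of the corollary.
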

An elementary computation shows that the bounds in the corollary converge to the bounds in Theorem~\ref{IntegralJNInterval} as~$\lambda \to 0$. So, at least in this aspect, the quantity~\eqref{StrangeBMO} is more flexible than the dyadic~$\BMO$ norm. On the other hand, it is worse; for example, it is not a norm. The condition~\eqref{MeanCondition} is important for the proof and we do not know whether it is necessary for the result; though for some similar results it is (see Remark~\ref{FirstTheoremInterval} at the end of Subsection~\ref{s21}). It may be abandoned in the case where the function~$\varphi$ is defined on the circle. We extend the definition of the quantitiy~\eqref{StrangeBMO} to the case of the circle in a natural way:
\eq{\label{StrangeBMOCircle}
[\varphi]_{\BMO(\T)} = \sup\Set{\av{(\varphi_\per - \av{\varphi_\per}{I})^2}{I}}{I \text{ is a subinterval of } \R \text{ and } \av{\varphi_\per}{I} \in \lambda \Z}.
}

\begin{Cor}\label{ApplicationCircle}
Fix~$\eps > 0$ and~$\lambda > 0$. Let~$\varphi \colon \T\to \mathbb{R}$ be a square summable function such that~$[\varphi]_{\BMO}< \eps$. Then~$e^{\mu \varphi}$ is summable\textup, provided~\eqref{SummabilityCondition} holds.
This bound is sharp in the sense that there exists~$\varphi$ such that~$[\varphi]_{\BMO} < \eps$\textup, however\textup,~$e^{(\mu+\delta) \varphi}$ is not summable for the limiting value of~$\mu$ and any~$\delta > 0$.
\end{Cor}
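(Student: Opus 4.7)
The plan is to deduce the corollary from the paper's new Bellman induction machinery, applied to a Bellman function tailored to the ``slit'' domain that encodes~\eqref{StrangeBMOCircle}. For any interval $I \subset \R$, the point $x_I = (\av{\varphi_\per}{I}, \av{\varphi_\per^2}{I})$ always lies in the parabolic half-plane $\Omega = \{(x_1, x_2) : x_2 \geq x_1^2\}$; the hypothesis $[\varphi]_{\BMO(\T)} < \eps$ furnishes the tighter constraint $x_2 \leq x_1^2 + \eps^2$ only when $x_1 \in \lambda\Z$. The admissible set is therefore $\Omega$ with thin parabolic slits along the integer lines $x_1 = k\lambda$, a domain whose non-convexity is of a stronger kind than that of $P_\eps$ in Subsection~\ref{s11}, but of precisely the type the paper's main results are designed to treat.

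Next, I would construct a Bellman majorant $B\colon \Omega \to \R_+$ that dominates $e^{\mu t}$ on the bottom parabola $\{x_2 = x_1^2\}$ and is concave along every segment $[P,Q]$ whose endpoints are admissible relative to the slit domain. By translation invariance under $x_1 \mapsto x_1 + \lambda$, it is natural to seek $B$ in a piecewise form on each vertical strip $\{k\lambda \leq x_1 \leq (k+1)\lambda\}$, matched continuously across the integer lines under the slit width $\eps$. The critical threshold for $\mu$ emerges from demanding consistency of these matchings; a direct computation rewrites~\eqref{SummabilityCondition} as the equivalent resonance identity $2\eps\sinh(\mu\lambda/2) = \lambda$, which is exactly the condition at which the gluing becomes singular. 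Feeding $B$ into the main induction theorems of the paper then yields $\av{e^{\mu\varphi}}{\T} \leq B(\av{\varphi}{\T}, \av{\varphi^2}{\T})$; the circle setting removes any need for an auxiliary hypothesis like~\eqref{MeanCondition}, because rotation invariance plus the intermediate value theorem let us start the splitting at a point where the running average over a sub-arc already lies in $\lambda\Z$, so the induction begins inside the constrained region for free.

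For sharpness, I would build a piecewise-constant extremizer $\varphi$ on $\T$ whose values belong to $\lambda\Z$ and whose arcs of constancy shrink geometrically with ratio $e^{-\mu\lambda}$, arranged so that the variance on every sub-arc with integer average equals exactly $\eps^2$; the same resonance identity ensures that the geometric construction closes up consistently on the circle and that $e^{(\mu+\delta)\varphi}$ fails integrability for any $\delta > 0$. The main obstacle is the construction and verification of $B$ on the slit domain: the non-convexity along the integer lines means the classical Bellman PDE reasoning must be replaced by a piecewise analysis together with matching conditions at each $x_1 = k\lambda$, which is precisely where the paper's new Bellman induction framework enters rather than Lemma~\ref{Lemma5} in its original form.
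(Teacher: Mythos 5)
Your reduction to the slit parabolic domain is correct, and the resonance reformulation $2\eps\sinh(\mu\lambda/2)=\lambda$ of the threshold in~\eqref{SummabilityCondition} is an accurate and clean computation; the description of the Bellman majorant via piecewise matching across the lines $x_1=k\lambda$ also tracks what the paper does via the foliation in the proof of Corollary~\ref{ApplicationInterval}. However, there is a genuine gap at exactly the point where the circle case differs from the interval case. In order to invoke the paper's Theorem~\ref{Th2} (or, equivalently, to run the Bellman induction for your majorant $B$), one needs the average $\av{\psi}{\T}=(\av{\varphi}{\T},\av{\varphi^2}{\T})$ of the whole circle to avoid $\interior\conv\OmOne$ --- that is, to lie below the piecewise-linear chord joining the slit tips $(k\lambda,k^2\lambda^2+\eps^2)$. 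For the interval, this is precisely what the extra hypothesis~\eqref{MeanCondition} supplies and is not automatic (Remark~\ref{FirstTheoremInterval} gives a counterexample without it). For the circle, the paper establishes it as Theorem~\ref{Th1}, whose proof is a substantial stopping-time argument on $\R$ using the periodic structure and the supporting-hyperplane condition~\eqref{Fifth}.

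Your proposed substitute --- ``rotation invariance plus the intermediate value theorem let us start the splitting at a point where the running average over a sub-arc already lies in $\lambda\Z$'' --- does not establish this. Choosing a cut point (or finding a sub-arc $J$ with $\av{\varphi}{J}\in\lambda\Z$) does not change the point $\av{\psi}{\T}$; and even if both $\av{\psi}{J}$ and $\av{\psi}{J^c}$ were known to lie outside $\interior\conv\OmOne$, the chord between them, which passes through $\av{\psi}{\T}$, could still enter $\interior\conv\OmOne$ --- exactly the non-convexity issue the whole machinery is built to handle. So the step ``the induction begins inside the constrained region for free'' is unjustified, and this is precisely the step that distinguishes the circle from the interval. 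The role played by Theorem~\ref{Th1} cannot be elided. (On sharpness, the paper does not build an extremizer on $\T$ from scratch but invokes Lemma~$3.3$ of~\cite{SZ2021} to transfer the interval extremizer to the circle; also note that the interval extremizer's values are $(n+\tfrac12)\lambda-\sqrt{\lambda^2/4+\eps^2}$, which do \emph{not} lie in $\lambda\Z$, so your proposed extremizer with values in $\lambda\Z$ would need checking.)
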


There are several results that describe weak conditions leading to the classical~$\BMO$. Mainly, they describe the functions~$h\colon \R\to (0,\infty)$ that allow the implication
\eq{
\sup\limits_{\genfrac{}{}{0pt}{-2}{I \text{\tiny\,- subinterval}}{\text{of }[0,1]}}\av{h(\varphi - \av{\varphi}{I})}{I} < \infty \qquad \Longrightarrow\qquad \varphi \in \BMO([0,1]).
}
We refer to the early paper~\cite{John1965} and also~\cite{CPR2022},~\cite{LSSVZ2015},~\cite{LY1984},~\cite{LT1987}, and~\cite{Stromberg1979}. We have not found results close to the corollaries above in the literature.

In the following section, we will introduce more general classes of functions and formulate the results in the spirit of Corollaries~\ref{ApplicationInterval} and~\ref{ApplicationCircle} in their generality. Those are the main results of our paper (Theorems~\ref{Th1} and~\ref{Th2}). We will also derive the corollaries from Theorems~\ref{Th1} and~\ref{Th2} in Subsection~\ref{s24}. Sections~\ref{S3} and~\ref{S4} contain the proofs of the main results. The appendix contains supplementary material: a short discussion of the Lebesgue differentiation theorem in Section~\ref{AS1} and the proofs of auxiliary geometric statements in Section~\ref{AS2}.

\section{Statement of results}\label{S2}
\subsection{Function classes}\label{s21}
The~$\BMO$ functions and functions with finite quantity~\eqref{StrangeBMO} are related to certain vectorial functions and martingales. We will follow the notation of~\cite{SZ2022}, which generalizes~\cite{IOSVZ2015_bis},~\cite{SZ2016}, and~\cite{SVZ2015}. Let~$\OmNull$ be an open strictly convex non-empty subset of~$\R^d$. This is the set where the averages of our vectorial functions martingales are allowed to attain values. The closed set~$\OmOne\subset \OmNull$ is the forbidden set for the averages. The introduction of such a set imposes restrictions on mean oscillations. For example, the parabolic strip~$P_\eps$ given in~\eqref{ParabolicStrip} is the set theoretic difference of~$\cl\OmNull$ and~$\interior\OmOne$ given by
\eq{\label{ParabolicStrip}
\OmNull = \set{x\in \R^2}{x_1^2 < x_2};\qquad \OmOne = \set{x\in \R^2}{x_1^2 + \eps^2 \leq x_2}.
}
The quantity~\eqref{StrangeBMO} is generated by the domain that is the difference of~$\cl\OmNull$ and~$\OmOne$, where
\eq{\label{DomainForStrangeBMO}
\OmNull = \set{x\in \R^2}{x_1^2 < x_2};\quad\OmOne =  \bigcup\limits_{n\in\mathbb{Z}}\Set{x\in \R^2}{x_1 = \lambda n, x_2 \geq \lambda^2n^2 + \eps^2}.
}
In other words, if~$[\varphi]_{\BMO}< \eps$, then~$\av{\psi}{J}\notin \OmOne$ for any interval~$J$, where~$\psi = (\varphi,\varphi^2)$, and, vice versa, if the averages of~$\psi$ avoid the set~$\OmOne$, then~$[\varphi]_{\BMO} \leq \eps$. See Fig.~\ref{Fig:StrangeBMO} for visualization. 

\begin{figure}[h!]
\centerline{\includegraphics[height=8cm]{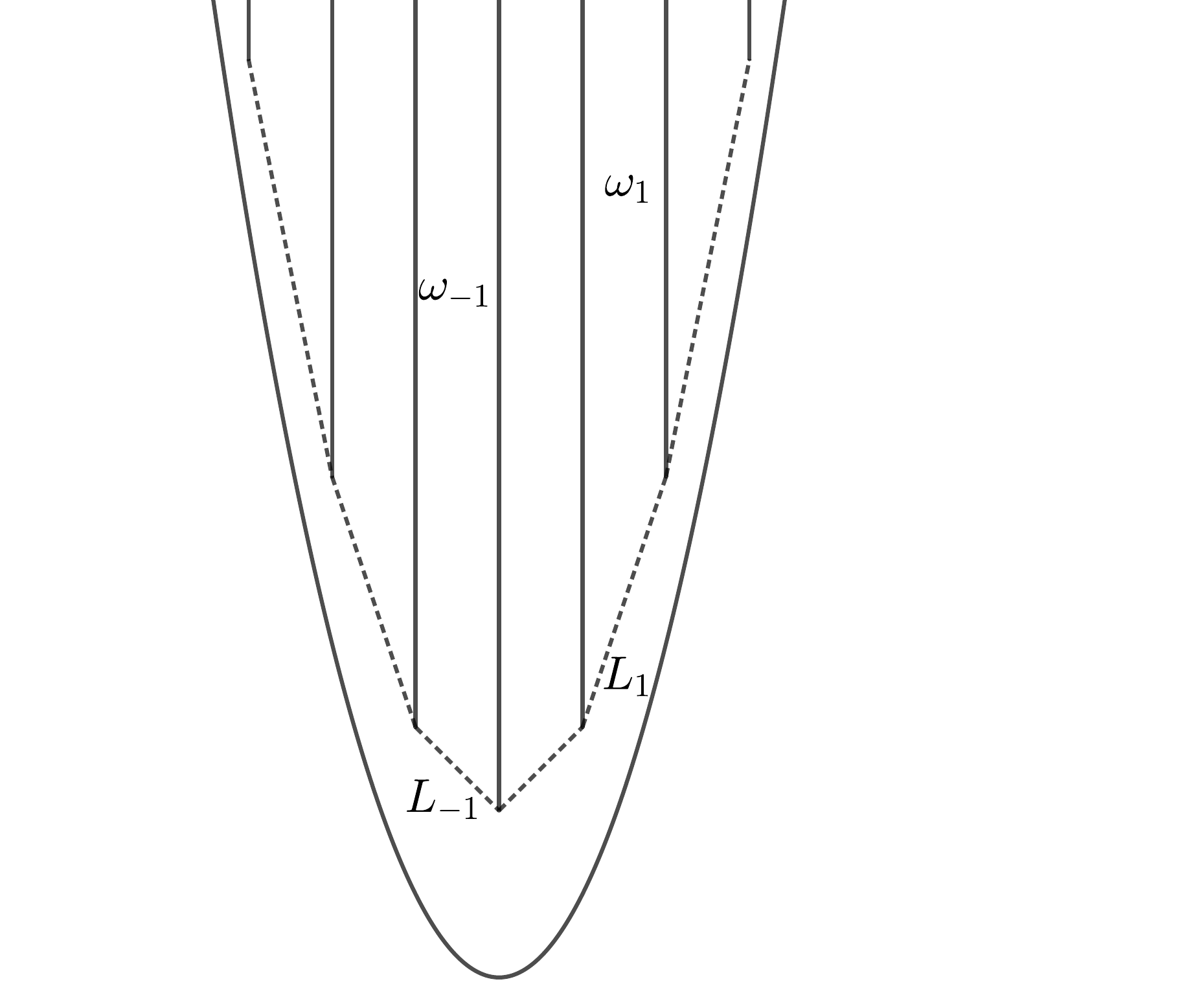}}
\caption{The domain related to the `norm'~\eqref{StrangeBMO}.}
\label{Fig:StrangeBMO}
\end{figure}

In the previous papers, the forbidden set~$\OmOne$ was strictly convex. Now we wish to get rid of this assumption. We postulate the following `axioms': 
\alg{
\label{First}&\text{1) the boundary of the set } \conv \OmOne \text{ does not contain rays;}\\
\label{Second}&\text{2) the closure of } \conv\OmOne \text{ lies inside } \OmNull;\\
\label{Third}&\text{3) the sets } \OmNull \text{ and } \conv\OmOne \text{ have congruent maximal inscribed cones, $\interior \conv \OmOne \ne \varnothing$;}\\
\label{Fourth}&\text{4) the set } (\interior \conv\OmOne)\setminus \OmOne~ \text{ is a locally finite union of its connectivity components } \omega_j;\\
\label{Fifth}&\text{5) for any } j \text{ there exists a supporting plane } L_j  \text{ to } \conv \OmOne \text{ that contains } E_j = \partial \omega_j \setminus \OmOne.
}
The fourth requirement means that for any bounded domain in~$\R^d$ there is only a finite number of~$\omega_j$ that intersect this domain. 
The reader may verify that the domain~\eqref{DomainForStrangeBMO} satisfies all five requirements. We provide a brief comment on them. The first requirement~\eqref{First}, in particular, implies~$\conv \OmOne$ is closed,  by the following lemma.
\begin{Le}\label{ClosureConvexHull}
Let~$F \subset \R^d$ be a closed set such that~$\partial \conv F$ does not contain rays. Then\textup,~$\cl \conv F = \conv F$.
\end{Le}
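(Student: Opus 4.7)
The plan is to assume for contradiction that $\conv F \neq \cl \conv F$ and extract a ray in $\partial \conv F$, contradicting the hypothesis. Set $K = \cl \conv F$. First I would dispose of the degenerate case $\interior K = \varnothing$: here $K = \partial \conv F$ has no rays by hypothesis, hence $K$ is bounded, so $F$ is compact and $\conv F = K$ is automatically closed. So we may assume $\interior K \neq \varnothing$, and then $\partial K = \partial \conv F$. A short approximation argument then gives $\interior K \subset \conv F$: around any $x \in \interior K$ pick $d+1$ affinely independent points $y_0,\ldots,y_d$ in a small ball inside $\interior K$, approximate each $y_i$ by $y_i^{(n)} \in \conv F$, and note that for large $n$ the point $x$ lies in the simplex $\conv\{y_0^{(n)},\ldots,y_d^{(n)}\} \subset \conv F$. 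Any counterexample $x \in K \setminus \conv F$ therefore lies in $\partial K$.

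The main technical input is the decomposition $K \subset \conv F + R$, where $R = \mathrm{rec}(K)$ is the recession cone. This is the standard asymptotic refinement of Caratheodory's theorem. Writing $x = \lim_n x_n$ with $x_n = \sum_{i=0}^d \lambda_i^{(n)} f_i^{(n)}$ and $f_i^{(n)}\in F$, one extracts a subsequence on which $\lambda_i^{(n)}\to\lambda_i$ and splits the indices $I = I_B \cup I_U$ according to whether $f_i^{(n)}$ stays bounded (so $f_i^{(n)}\to f_i\in F$) or $|f_i^{(n)}|\to\infty$ (so $f_i^{(n)}/|f_i^{(n)}|\to u_i$; a direct computation using convex combinations $(1 - t/|f_i^{(n)}|)y + (t/|f_i^{(n)}|)f_i^{(n)} \to y+tu_i$ gives $u_i\in R$). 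If all scalars $\lambda_i^{(n)}|f_i^{(n)}|$ stay bounded on $I_U$, then along a subsequence they converge to limits $\mu_i\geq 0$; the weights $\lambda_i$ must vanish on $I_U$ (else $\lambda_i^{(n)}|f_i^{(n)}|\to\infty$), so $\sum_{I_B}\lambda_i = 1$ and $x = \sum_{I_B}\lambda_i f_i + \sum_{I_U}\mu_i u_i = z+v$ with $z\in\conv F$ and $v\in R$. Otherwise, normalizing by $M_n = \max_i \lambda_i^{(n)}|f_i^{(n)}|\to\infty$ produces a nontrivial positive relation $\sum_{I_U}\tilde\mu_j u_j = 0$ among the $u_j$, which I would use to eliminate one unbounded term and iterate until the bounded case is reached.

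Given the decomposition the proof finishes quickly. For $x \in \partial K \setminus \conv F$ write $x = z+v$; necessarily $v \neq 0$, since $v = 0$ would force $x = z \in \conv F$. The ray $\ell = \{z + tv : t \geq 0\}$ lies in $K$ because $z\in K$ and $v\in R$. If $\ell$ were entirely contained in $\partial K$ we would already have the forbidden ray; otherwise $\ell$ meets $\interior K$ at some parameter $t_0$, and the open interval $\ell\cap\interior K$ cannot have a finite supremum $t_+$ either, for then the ray $\{z+tv : t \geq t_+\}\subset K\setminus\interior K = \partial K$ would again be a forbidden ray. Hence $\{z+tv : t\geq t_0\}\subset\interior K\subset\conv F$, and the convex set $\{t\geq 0 : z+tv\in\conv F\}$ contains both $0$ and $[t_0,\infty)$, hence all of $[0,\infty)$. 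In particular $x = z+v \in \conv F$, contradicting $x \notin \conv F$.

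The central obstacle is the decomposition step: the iterative normalization that converts the unbounded part of a Caratheodory representation into a recession vector is the only nonroutine ingredient. The remaining steps are standard convex-geometric bookkeeping.
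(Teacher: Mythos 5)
Your strategy is genuinely different from the paper's. The paper takes a point $x\in\partial\conv F\setminus\conv F$, erects a supporting hyperplane $L$ at $x$, and uses Lemma~\ref{CompactCap} (together with the no-rays hypothesis) to make the ``slabs'' $F\cap(L+\overline{B_r(0)})$ compact; it then produces separating hyperplanes $E_r$, extracts a direction $e$ from a sequence $y_r\in F$ escaping to infinity, and shows the resulting ray lies in $L\cap\cl\conv F\subset\partial\conv F$. Your proof instead reduces everything to a single decomposition $\cl\conv F\subset\conv F+\mathrm{rec}(\cl\conv F)$ and then extracts the forbidden ray from the recession part. The reduction to $\partial K$, the degenerate case $\interior K=\varnothing$, the approximation argument showing $\interior K\subset\conv F$, and the final ray-extraction from $x=z+v$ are all correct and cleanly written. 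The two proofs trade off differently: the paper's is self-contained and uses the no-rays hypothesis \emph{early} (via Lemma~\ref{CompactCap}), whereas yours isolates a clean convexity lemma and invokes the hypothesis only at the end.

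The gap is the decomposition itself. You label it ``the standard asymptotic refinement of Carath\'eodory's theorem,'' but the sketch you give does not close. In the bounded sub-case the limit passage is fine (you correctly observe $\lambda_i\to 0$ on $I_U$, so $\sum_{I_B}\lambda_i=1$ and $z=\sum_{I_B}\lambda_if_i\in\conv F$). The trouble is the unbounded sub-case. Normalizing by $M_n=\max_{j\in I_U}\lambda_j^{(n)}|f_j^{(n)}|\to\infty$ does yield a relation $\sum_{I_U}\sigma_ju_j=0$ with $\sigma_j\ge 0$ not all zero, but this relation holds only among the \emph{limit directions} $u_j$, not among the finite-$n$ points $f_j^{(n)}$. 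If you try to cancel by setting
\[
\hat\lambda_j^{(n)}=\lambda_j^{(n)}-c_n\,\sigma_j/|f_j^{(n)}|,\qquad c_n=\min_{\sigma_j>0}\frac{\lambda_j^{(n)}|f_j^{(n)}|}{\sigma_j},
\]
then one coefficient vanishes, but the value of the modified combination is $x_n-c_n\sum_j\sigma_jf_j^{(n)}/|f_j^{(n)}|$; the correction is $c_n$ (which is of order $M_n\to\infty$) times a vector tending to $0$, and there is no control of the product. The new sequence need not converge to $x$, and the total weight is no longer $1$, so you are not left with a Carath\'eodory representation of $x_n$ to which the argument can be re-applied. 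Thus ``eliminate and iterate'' does not terminate as described. I am also not aware of this decomposition being a named theorem that you could simply cite; if it is, a precise reference is needed, and if not, you would have to prove it, at which point the no-rays hypothesis (which you defer to the end) might well be needed already in the decomposition step -- which is precisely where the paper's Lemma~\ref{CompactCap} comes in. As written, then, the central step of your proof is not established, and I would treat this as a genuine gap rather than a routine detail.
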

It is interesting to find an infinite-dimensional version of the previous lemma in connection with the Krein--Milman theorem. We postpone the proof of Lemma~\ref{ClosureConvexHull} till Subsection~\ref{AS2}. 

The second requirement~\eqref{Second} is quite natural when~$d=2$, where most interesting applications occur. For larger~$d$ its absence may lead to unexpected interesting effects, of which we have very poor understanding. The second assumption is violated for three-dimensional domains that appear in~\cite{SVZ2020} and~\cite{VZZ2022}.

The third assumption~\eqref{Third} is also necessary for the theory when~$d=2$. In the papers~\cite{SZ2016} and~\cite{SZ2022} it was called the cone condition. If it is violated when~$d=2$ many natural Bellman functions are equal to~$-\infty$ on parts of the domain (see Lemmas~$2.4$ and~$4.8$ in \cite{SZ2022}). The assumptions~\eqref{First},~\eqref{Second}, and~\eqref{Third} work with the convex hull~$\conv \OmOne$ rather than with~$\OmOne$ itself. The fourth assumption~\eqref{Fourth} simplifies the structure of the forbidden set and~\eqref{Fifth} is extremely important for the proof. Without this assumption the results are not valid in any sense (see Remark~\ref{RemarkConnectivity} below).
\begin{Rem}\label{StructuralRemark}
Assume the conditions~\eqref{First}\textup,~\eqref{Second}\textup,~\eqref{Third}\textup,~\eqref{Fourth}\textup, and~\eqref{Fifth} are satisfied. Then\textup,
\eq{\label{BoundaryStructure}
\partial \conv\OmOne \setminus \OmOne = \bigcup_j\big(\partial \omega_j \setminus\OmOne\big)  \subset \bigcup_j L_j.
}
The equality above might need justification. 

The inclusion~$\partial \conv\OmOne \setminus \OmOne \subset \bigcup_j\big(\partial \omega_j \setminus\OmOne\big)$ may be justified as follows. If~$x$ belongs to~$\partial \conv\OmOne$\textup, but does not belong to~$\OmOne$\textup, there exists a sequence~$\{x_n\}_n$ lying in~$\interior\conv\OmOne\setminus\OmOne$ that tends to~$x$ \textup(here we use that~$\interior\conv\OmOne$ is non-empty by~\eqref{Third}\textup). By~\eqref{Fourth}\textup, there exists~$j$ and a subsequence~$\{x_{n_k}\}_k$ such that~$x_{n_k}\in \omega_j$. This proves the desired inclusion. The reverse inclusion follows from definitions.

Let~$L_j^+$ be the open half-space generated by~$L_j$ that contains~$\interior \conv\OmOne$. Then\textup, for any~$x\in E_j$ there exists a small radius~$r$ such that~$B_r(x)\cap L_j^+\subset \omega_j$. 

To prove this\textup, let~$r$ be so small that~$B_r(x) \cap \OmOne = \varnothing$. Assume there exists~$y\notin \omega_j$ that belongs to~$B_r(x)\cap L_j^+$. Since~$x\in \partial \omega_j$\textup, there also exists a point~$z\in \omega_j$ inside~$B_r(x)\cap L_j^+$. Consequently\textup, there is a point of~$\partial \omega_j$ on~$[y,z]$\textup, which contradicts~\eqref{Fifth}. Thus\textup, any~$y\in B_r(x) \cap L_j^+$ belongs to~$\omega_j$.
\end{Rem}

It will be convenient to use the notation
\eq{
\Om = \cl\OmNull\setminus \OmOne;\quad \OmConv = \cl\OmNull\setminus \conv\OmOne.
}
Note that unlike papers~\cite{SZ2016} and~\cite{SZ2022}, those sets are not closed.
We finish the discussion of the conditions imposed on~$\Om$ with a simple sufficient condition.
\begin{St}
Let~$d=2$ and let~$\Om$ be generated by~$\OmOne$ and~$\OmNull$ that satisfy requirements~\eqref{First}\textup,~\eqref{Second}\textup,~\eqref{Third}\textup, and~\eqref{Fourth}. If~$\OmOne$ is linearly connected\textup, then~\eqref{Fifth} also holds true.
\end{St}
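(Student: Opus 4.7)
The plan is to show that, for each connected component $\omega_j$ of $(\interior\conv\OmOne)\setminus\OmOne$, the set $E_j = \partial\omega_j\setminus\OmOne$ lies in a single $1$-dimensional face of $\conv\OmOne$; the supporting line through that face then serves as the required $L_j$.

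As a first reduction I would verify that $E_j\subset\partial\conv\OmOne$: if $x\in\partial\omega_j\setminus\OmOne$ lay in $\interior\conv\OmOne$, then $x$ would belong to some component $\omega_k$ of the open set $(\interior\conv\OmOne)\setminus\OmOne$; since $\omega_k$ is open and $x\in\partial\omega_j$, one would be forced to conclude $\omega_k=\omega_j$, contradicting $x\in\partial\omega_j$. Combined with the standard finite-dimensional fact that extreme points of $\conv\OmOne$ lie in the closed set $\OmOne$, and with assumption~\eqref{First} (no rays on the boundary), this places $E_j$ inside the union of relative interiors of $1$-faces of $\conv\OmOne$.

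The heart of the argument is to rule out that $E_j$ meets two distinct $1$-faces. Suppose, for contradiction, that $x,x'\in E_j$ lie in the relative interiors of different $1$-faces $F,F'$ with distinct supporting lines. A short local analysis near $x$ --- where $\partial\conv\OmOne$ is locally on the supporting line of $F$, and $\OmOne$ is closed while missing $x$ --- shows that a small relatively open half-disk of $\conv\OmOne$ at $x$ is entirely contained in $\omega_j$, and similarly at $x'$. Using that $\omega_j$ is open and connected in $\R^2$ (hence path-connected by simple arcs), I would then produce a simple arc $\gamma\colon[0,1]\to\R^2$ with $\gamma(0)=x$, $\gamma(1)=x'$, and $\gamma((0,1))\subset\omega_j$.

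Because $\conv\OmOne$ is a closed two-dimensional convex set with non-empty interior and no rays on its boundary, it is homeomorphic to a closed disk or closed half-plane; the chord-type arc $\gamma$ therefore separates $\interior\conv\OmOne$ into two open components $V^+,V^-$, and the pieces of $\partial\conv\OmOne\setminus\{x,x'\}$ group into two parts $A^+,A^-$ adjacent to $V^+,V^-$ respectively. Because $\gamma$ misses $\OmOne$ entirely, the sets $\OmOne^\pm := \OmOne\cap(V^\pm\cup A^\pm)$ form a disjoint cover of $\OmOne$ by sets which are open in the subspace topology (using local flatness at points of $A^\pm$, so that small neighborhoods in $\conv\OmOne$ cannot cross $\gamma$) and are both non-empty, since each contains an endpoint of $F$ --- an extreme point of $\conv\OmOne$ lying in $\OmOne$. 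This contradicts the connectedness of $\OmOne$. The main technical obstacle will be in this last step: producing a simple $\gamma$, getting the topology of $\interior\conv\OmOne\setminus\gamma$ right (especially in the unbounded case, where one of the two sides consists of two boundary arcs glued together through infinity), and verifying openness of $\OmOne^\pm$ in the subspace topology.
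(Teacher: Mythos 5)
Your plan is correct and, once the planar-topology details you flag are filled in, yields a full proof; but the route is genuinely different from the paper's. The paper argues directly: pick $x\in E_j$ with supporting line $\ell$; the chord $\ell\cap\conv\OmOne$ is a bounded segment (no rays on the boundary) whose endpoints $P,Q$ are extreme points, hence lie in $\OmOne$; linear connectedness of $\OmOne$ gives a path $\gamma\subset\OmOne$ from $P$ to $Q$, and the Jordan contour $[P,Q]\cup\gamma$ traps the connected set $\omega_j$ in its interior, after which $E_j\subset\ell$ is read off immediately from~\eqref{BoundaryStructure}. You argue by contradiction, building a simple arc inside $\omega_j$ that joins two hypothetical points of $E_j$ on distinct $1$-faces and using it to disconnect $\OmOne$. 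Both are planar Jordan-type separation arguments, but the connectedness of $\OmOne$ is exploited from opposite sides of the picture: the paper builds a curve inside $\OmOne$, you build one in its complement. Your version has the small advantage of requiring only topological connectedness of $\OmOne$ rather than linear connectedness, so it proves a slightly stronger statement; the cost is the openness bookkeeping for $\OmOne^{\pm}$, which you rightly identify as the crux. For that step, a cleaner device than local flatness (and one that handles the unbounded half-plane case uniformly) is to note that the compact arc $\gamma$ and the closed set $\OmOne$ are disjoint, so $\dist(\gamma,\OmOne)>0$; then for $w\in\OmOne$ and $r<\dist(\gamma,\OmOne)$, the set $B_r(w)\cap\conv\OmOne$ is convex, hence connected, and misses $\gamma$, so it lies in a single component of $\conv\OmOne\setminus\gamma$, which is exactly the required relative openness of $\OmOne^{\pm}$.
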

\begin{proof}
Consider~$\omega_j$ for some~$j$. Let~$x\in \partial \omega_j\setminus \OmOne$. Then,~$x\in \partial \conv\OmOne$. Let~$\ell$ be a supporting line to~$\conv\OmOne$ at the point~$x$. It suffices to prove that it is unique and does not depend on the particular choice of~$x\in \partial \omega_j\setminus \OmOne$.

Since the boundary of~$\conv\OmOne$ does not contain rays,~$\ell\cap \conv\OmOne$ is a bounded closed set. Its leftmost and rightmost (inside the line~$\ell$) points~$P$ and~$Q$ belong to~$\OmOne$. By the assumption that~$\OmOne$ is linearly connected, there exists a path~$\gamma$ connecting~$P$ to~$Q$ inside~$\OmOne$. Therefore,~$\omega_j$ lies inside the contour bounded by~$[P,Q]$ and~$\gamma$. Thus, any point in~$\partial \omega_j\setminus \OmOne$ lies in~$\ell$ by~\eqref{BoundaryStructure}. 
\end{proof}

Now we are ready to define the function classes. We will be considering summable functions that map~$[0,1]$ or~$\T$ into~$\partial \OmNull$. The averages of such functions are defined coordinate-wise. Define the function classes
\begin{align}
\label{Class}\Class_\Omega = \Set{\psi\in L_1([0,1],\R^d)}{\forall x\in [0,1]\ \psi(x)\in \partial \OmNull, \forall J\ \text{subinterval of}\ [0,1]\quad \av{\psi}{J}\notin \OmOne};\\
\ClassC_\Omega = \Set{\psi\in L_1(\T,\R^d)}{\forall x\in\T\ \psi(x)\in \partial \OmNull, \forall J\ \text{subinterval of}\ \R\quad \av{\psi_\per}{J}\notin \OmOne}.
\end{align}
We are ready to formulate our first main result.
\begin{Th}\label{Th1}
Assume the domains satisfy~\eqref{First}\textup,~\eqref{Second}\textup,~\eqref{Third}\textup,~\eqref{Fourth}\textup, and~\eqref{Fifth}. Then\textup, for any~$\psi \in \ClassC_\Omega$\textup,~$\av{\psi}{\T}\notin \interior\conv\OmOne$.
\end{Th}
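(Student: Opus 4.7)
The plan is to argue by contradiction. Suppose $m := \av{\psi}{\T} \in \interior\conv\OmOne$. Taking $J = [0,1]$ in the definition of $\ClassC_\Omega$ yields $m \notin \OmOne$ directly, so axiom~\eqref{Fourth} places $m$ in some pocket $\omega_j$. I would then invoke axiom~\eqref{Fifth} to fix the supporting hyperplane $L_j$ containing $E_j$ and let $\ell$ denote the associated affine functional normalized so that $L_j = \{\ell = 0\}$ and $\ell|_{\conv\OmOne} \leq 0$; since $\omega_j \subset \interior\conv\OmOne$, strict separation gives $\ell(m) < 0$.

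Next, by axiom~\eqref{Second}, $\psi_\per(x) \in \partial\OmNull$ lies outside $\cl\conv\OmOne$, so the averages $\av{\psi_\per}{[a,b]}$ for a Lebesgue point $a$ (see Section~\ref{AS1}) tend to $\psi_\per(a) \notin \conv\OmOne$ as $b \to a^+$. Since $\av{\psi_\per}{[a,a+1]} = m \in \omega_j \subset \interior\conv\OmOne$, the intermediate value theorem yields some $b^* \in (a, a+1)$ with $u := \av{\psi_\per}{[a, b^*]} \in \partial\conv\OmOne$; as $u \notin \OmOne$, Remark~\ref{StructuralRemark} places $u$ in some $E_{j'} \subset L_{j'}$. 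Examining the complementary arc $J_1 = [b^*, a+1]$ on $\T$ with average $v$, additivity yields $|J_0|u + |J_1|v = m$; applying the affine functional $\ell_{j'}$ associated with $L_{j'}$ (so $\ell_{j'}(u) = 0$ and $\ell_{j'}(m) < 0$), I obtain $\ell_{j'}(v) = \ell_{j'}(m)/|J_1| < 0$, i.e., $v$ lies strictly on the $\conv\OmOne$-side of $L_{j'}$ with $|\ell_{j'}(v)| > |\ell_{j'}(m)|$, while $v \notin \OmOne$.

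The hard part, and where I expect the bulk of the technical work, is closing the argument from this single descent. The tempting route is to iterate on the shorter arc $J_1$ with $v$ playing the role of $m$, producing a sequence of arcs whose averages penetrate ever more deeply into (possibly varying) pockets. This only works if one can guarantee that $v$ lies in some $\omega_{j''}$ rather than escaping $\conv\OmOne$ altogether; the cone condition~\eqref{Third} together with a careful choice of the Lebesgue base point $a$---ensuring that the full path $b \mapsto \av{\psi_\per}{[a,b]}$ for $b > b^*$ remains inside $\conv\OmOne$---should enable the descent. Once the iteration is set up, the locally finite structure from axiom~\eqref{Fourth} together with Lemma~\ref{ClosureConvexHull} (which rules out rays in $\partial\conv\OmOne$ and thereby keeps the geometry transverse to each $L_{j_k}$ bounded) should prevent the cascade from continuing indefinitely, forcing either a direct contradiction after finitely many steps or a limiting contradiction with Lebesgue differentiation as the arcs $J_k$ shrink. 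The delicate balance is to arrange the iteration so that the shrinkage of $|J_k|$ does not outpace the growth of $|\ell_{j_k}(\cdot)|$, keeping each step genuinely strict.
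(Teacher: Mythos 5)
Your opening is sound and matches the paper: you correctly reduce to $m\in\omega_j$ and set up the affine functional from axiom~\eqref{Fifth}. But the core mechanism you propose — peeling off one arc via IVT and then iterating on the shrinking complementary arc — is a genuinely different route from the paper's, and the concerns you raise about it are real and, in fact, fatal without new ideas. First, your stopping point $u$ lands in $\partial\conv\OmOne\setminus\OmOne$, hence in some $E_{j'}$, but $j'$ need not equal $j$ and will in general change from step to step; there is then no single functional $\ell$ against which a monotone descent can be measured, and the quantity $|\ell_{j_k}(\cdot)|$ has no reason to grow in a coherent sense across changing supporting hyperplanes. Second, as you note, $v = \av{\psi_\per}{J_1}$ need not lie in $\interior\conv\OmOne$ at all (the ray from $u$ through $m$ can exit $\conv\OmOne$ before reaching $v$), so the inductive hypothesis ``the average sits in some pocket'' is not self-propagating. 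Third, even if both obstacles were overcome, arcs of shrinking length with averages drifting deeper into $\conv\OmOne$ do not obviously violate anything: the curve $t\mapsto\av{\psi_\per}{[a,t]}$ can certainly pass through $\interior\conv\OmOne$, and Lebesgue differentiation on a vanishing arc gives information about $\psi_\per$ near a point, not about where the average sits.

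The paper goes in the opposite direction and this is the key structural difference. Rather than shrinking the arc, it \emph{lengthens} the interval using periodicity: for any Lebesgue point $t$, the curve $s\mapsto\av{\psi}{[t,t+s]}$ starts near $\partial\OmNull$ (outside $\conv\OmOne$) and returns to $x\in\omega_j$ at $s=1$, so it must cross $E_j$ — the \emph{same} pocket boundary $E_j$ for every $t$. Defining $\tau(t)$ as the first such crossing time, the proof strings together intervals $[t_n,t_n+\tau(t_n)]$ whose averages all lie in $E_j\subset L_j$, up to a small exceptional set. Writing $\av{\psi}{[0,t_N]}$ as a convex combination of these averages plus a small error, and letting $t_N\to\infty$, periodicity forces $\av{\psi}{[0,t_N]}\to x$ while the good part of the decomposition stays in $\conv E_j\subset L_j$ with coefficient tending to $1$. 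Since $\dist(x,L_j)>0$, this is the contradiction. The fixed pocket index $j$, the fixed hyperplane $L_j$, and the passage to arbitrarily long intervals via periodicity are exactly the three ingredients missing from your iteration scheme, and they are what make the argument close.
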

\begin{Rem}\label{FirstTheoremInterval}
A similar assertion for functions on the interval is invalid. Namely\textup, one may consider the domain~\eqref{DomainForStrangeBMO} and a function~$\psi\colon [0,1]\to \set{x\in \R^2}{x_1^2 = x_2}$ such that~$\psi \in \Class_\Omega$ and~$\av{\psi}{[0,1]} \in \interior\conv\OmOne$. See Fact~$9.2$ in~\textup{\cite{SZ2022}} for a similar function attaining three values only.
\end{Rem}
\begin{Rem}\label{InvarianceRemark}
We may replace the interval~$[0,1]$ with an arbitrary interval in our formulations without any harm. What is more\textup, this independence of the interval \textup(the homogeneity of the problem\textup) is crucially important.
 \end{Rem}

\subsection{Locally concave functions}\label{s22}
Let~$\omega \subset \R^d$ be a set. A function~$G\colon \omega \to \R\cup \{-\infty,+\infty\}$ is called locally concave provided its restriction to any convex subset of~$\omega$ is concave. We adopt standard convention about infinite values of convex and concave functions (see Section~$4$ in~\cite{Rockafellar1970}). Locally concave functions play an important role in the subject. We will show this on the standard~$\BMO$ example. Assume~$\Omega$ is the parabolic strip~$P_\eps$ defined in~\eqref{ParabolicStrip} and~$G\colon P_\eps \to \R$ is a finite locally concave function. Let~$\|\varphi\|_{\BMO([0,1])}\leq \eps$ and let~$\psi = (\varphi,\varphi^2)$ be the corresponding vectorial function. We apply Lemma~\ref{Lemma5} and obtain the splitting~$[0,1] = I_+\cup I_-$. If we disregard the number~$\delta$ or assume~$G$ is locally concave on a slightly wider parabolic strip, then local concavity implies
\eq{
G(x) \geq \frac{|I_+|}{|I|}G(x_+) + \frac{|I_-|}{|I|}G(x_-),\quad x_{\pm} = \av{\psi}{I_{\pm}},
} 
since~$G|_{[x_+,x_-]}$ is concave. We apply Lemma~\ref{Lemma5} to each of the intervals~$I_\pm$ in the role of~$[0,1]$ and obtain a good splitting of each of them. We thus get a splitting of~$[0,1]$ into four intervals~$J$ such that
\eq{\label{InductionResult}
G(x)\geq \sum_{J} \frac{|J|}{|I|}G(x_J),\qquad x_J = \av{\psi}{J}.
}
We continue this process and, at step~$n$, obtain a splitting of~$[0,1]$ into~$2^n$ intervals~$J$ with the same inequality. Note that the length of each interval~$J$ is at most~$(1-\nu)^{n}$, where~$\nu$ is the splitting parameter from Lemma~\ref{Lemma5}. The sum on the right of~\eqref{InductionResult} converges to~$\int_0^1G(\psi(t))\,dt$ (here we skip the limiting argument based on the Lebesgue differentiation theorem). If we assume that~$G(t,t^2) = e^{t}$, which might be thought of as the boundary condition for a locally concave function~$G$, then we obtain the inequality
\eq{
G(x) \geq \int\limits_0^1e^{\varphi(t)}\,dt.
}
This estimate might be thought of as a version of Jensen's inequality on non-convex domain, if we rewrite it in the form~$G(\av{\psi}{[0,1]})\geq \av{G(\psi)}{[0,1]}$.
The procedure of splitting an interval into subintervals and keeping track of the quantities~\eqref{InductionResult} is informally called the \emph{Bellman induction}.

Thus, a locally concave function on a parabolic strip with the boundary conditions~$G(t,t^2) = e^t$ leads to an estimate in the spirit of John and Nirenberg. Surprisingly, the minimal possible function~$G$ leads to sharp estimates, in particular, to Theorem~\ref{IntegralJNInterval}. Let us introduce this object and for this return to the generality of the sets~$\OmOne$ and~$\OmNull$. Let~$f\colon \partial \OmNull\to \R$ be a locally bounded function. Consider the set
\eq{
\Lambda_{\Omega,f} = \Set{G\colon \Omega \to \R\cup \{-\infty,+\infty\}}{G\ \text{is locally concave on~$\Om$ and}\ \forall x\in \partial \OmNull\quad G(x) = f(x)}
}
and the pointwise minimal function
\eq{
\BG_{\Omega,f}(x) = \inf\Set{G(x)}{G\in \Lambda_{\Omega,f}},\qquad x\in \Omega.
}
Note that~$\BG_{\Omega,f} \in \Lambda_{\Omega,f}$. This function will play an important role in the subject.

The computation of~$\BG_{\Omega,f}$ is a separate problem on the intersection of differential geometry and PDE. In the case where~$d=2$ and~$\OmOne$ is strictly convex, it was solved in~\cite{ISVZ2023} (see also earlier papers~\cite{IOSVZ2015} and~\cite{ISVZ2018} for particular cases).

\begin{Rem}\label{DilationInvariance}
The Bellman induction is based on the dilation invariance of the problem in question. One may define the~$\BMO$ space on an arbitrary interval in a similar manner and study the John--Nirenberg inequality. The sharp constants will be the same since the problem is dilation invariant by Remark~\textup{\ref{InvarianceRemark}}.
\end{Rem}

\subsection{New splitting lemma}\label{s23}
\begin{Le}\label{EvenNewer5}
Assume the domains~$\OmNull$ and~$\OmOne$ satisfy requirements~\eqref{First}\textup,~\eqref{Second}\textup,~\eqref{Third}\textup,~\eqref{Fourth}\textup, and~\eqref{Fifth}. For every function~$\psi \in \Class_\Omega$ with~$x=\av{\psi}{[0,1]}\notin \interior\conv\OmOne$ there exists a non-trivial splitting~$[0,1] = I_+\cup I_-$ into two intervals such that
\eq{
\Big[\av{\psi}{I_+},\av{\psi}{I_-}\Big]\subset \cl\OmNull\setminus \interior \conv \OmOne.
}
\end{Le}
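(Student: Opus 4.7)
The plan is to sweep through all non-trivial splittings $[0,1] = [0,t] \cup [t,1]$ parameterised by $t \in (0,1)$. Set $f(t) = \av{\psi}{[0,t]}$ and $g(t) = \av{\psi}{[t,1]}$; both are continuous $\Omega$-valued functions of $t$, and additivity of the integral gives $x = tf(t) + (1-t)g(t)$, so that $x \in [f(t), g(t)]$ for every $t$. Since $\conv\OmOne$ is closed by Lemma~\ref{ClosureConvexHull}, since $x \notin \OmOne$ (apply $\psi \in \Class_\Omega$ to $J = [0,1]$), and since $x \notin \interior\conv\OmOne$ by hypothesis, there are exactly two cases to treat: $x \in \cl\OmNull \setminus \conv\OmOne$ (Case~A) or $x \in \partial\conv\OmOne \setminus \OmOne$ (Case~B).

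In Case~A the strict separation of $x$ from the closed convex set $\conv\OmOne$ supplies an affine functional $\ell$ and a constant $c$ with $\ell(x) < c$ and $\ell \geq c$ on $\conv\OmOne$. Let $F(t) = \int_0^t (\ell \circ \psi - c)\,ds$: then $F$ is continuous, $F(0) = 0$, and $F(1) = \ell(x) - c < 0$, while the required conditions $\ell(f(t)) \leq c$ and $\ell(g(t)) \leq c$ are together equivalent to $F(1) \leq F(t) \leq 0$. A short intermediate-value argument—using that $F$ interpolates between $0$ and $F(1) < 0$—produces an interior $t \in (0,1)$ realising this inequality, and the segment $[f(t), g(t)]$ lies in the half-space $\{\ell \leq c\}$, which is disjoint from $\conv\OmOne$.

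In Case~B, by~\eqref{BoundaryStructure} we have $x \in E_j \subset L_j$ for some $j$. Normalise $\ell = \ell_j$ so that $L_j = \{\ell = 0\}$, $\conv\OmOne \subset \{\ell \geq 0\}$, and $L_j^+ = \{\ell > 0\}$. By the second half of Remark~\ref{StructuralRemark}, a small ball $B_r(x)$ satisfies $B_r(x) \cap L_j^+ \subset \omega_j \subset \interior\conv\OmOne$. Since $x$ is an interior point of $[f(t), g(t)]$ for every $t \in (0,1)$, any excursion of the segment into $L_j^+$ near $x$ enters $\interior\conv\OmOne$; combined with the identity $t\ell(f(t)) + (1-t)\ell(g(t)) = 0$, this means the segment avoids $\interior\conv\OmOne$ if and only if both endpoints lie on $L_j$, i.e.\ $F(t) := \int_0^t \ell \circ \psi\, ds = 0$. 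Since $F(0) = F(1) = 0$ and $F$ is continuous, the intermediate value theorem settles the case when $F$ changes sign on $(0,1)$; otherwise, after possibly replacing $\ell$ by $-\ell$, we may assume $F \geq 0$ on $[0,1]$ with $F \not\equiv 0$, and the plan is to derive a contradiction from $\psi \in \Class_\Omega$.

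In this remaining sub-case, $f((0,1]) \subset \Omega \cap \{\ell \geq 0\}$; being a continuous image of a connected set and containing $f(1) = x$, it lies in the connected component $S_0$ of $\Omega \cap \{\ell \geq 0\}$ that contains $x$. The combination of~\eqref{Second}, \eqref{Fifth}, and Remark~\ref{StructuralRemark} localises $S_0$ near $x$: the ``end'' points $\overline{E_j}\setminus E_j \subset \OmOne$ together with the parts of $\OmOne$ bordering $\omega_j$ pinch $\{\ell \geq 0\} \cap \cl\OmNull$ locally inside $\OmOne$, while the strict inclusion $\cl \conv \OmOne \subset \OmNull$ keeps $S_0$ bounded away from $\partial \OmNull$; a careful local-geometry argument then yields $\overline{S_0} \cap \partial\OmNull = \varnothing$. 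On the other hand, Lebesgue differentiation (cf.\ Section~\ref{AS1}) combined with the identity $tf(t) = \tau f(\tau) + \int_\tau^t \psi(s)\,ds$ forces the curve $f$ to accumulate on $\partial\OmNull$—concretely $f(t) \to \psi(0) \in \partial\OmNull$ along sequences $t \to 0^+$ obtained from right Lebesgue points of $\psi$ near zero—contradicting $\overline{S_0} \cap \partial\OmNull = \varnothing$. The main obstacle in this plan is precisely this closed-set/topology step: establishing $\overline{S_0} \cap \partial\OmNull = \varnothing$ rigorously requires a detailed local analysis at the boundary points of the flat face $E_j$, and converting the accumulation statement into a genuine contradiction requires a slightly non-trivial handling of Lebesgue points when $0$ itself is not one.
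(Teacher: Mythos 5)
Your Case A is correct, and it is a cleaner argument than the paper's: the paper sweeps the split point $t$ from $1/2$ toward $1$ and tracks when the half-segment $[\av{\psi}{[0,t]},x]$ first detaches from $\interior\conv\OmOne$, whereas you strictly separate $x$ from the closed set $\conv\OmOne$ (closedness via Lemma~\ref{ClosureConvexHull}) by an affine $\ell$ and locate $t$ with $F(1)\le F(t)\le 0$ by the intermediate value theorem. Both work; yours avoids the closedness-of-the-sweep-set check and is easier to make fully rigorous.

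In Case B your reduction is also sound and is conceptually the same as the paper's: $x\in E_j\subset L_j$ by~\eqref{BoundaryStructure}, the segment $[f(t),g(t)]$ contains $x$ in its interior, and by the second half of Remark~\ref{StructuralRemark} it avoids $\interior\conv\OmOne$ iff both endpoints sit on $L_j$, i.e.\ iff $F(t)=\int_0^t\ell\circ\psi=0$. If $F$ has an interior zero you are done.

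The genuine gap is the remaining subcase $F>0$ on $(0,1)$ (equivalently, after reflecting $[0,1]$, one of the two cases of one-signedness; note that ``replacing $\ell$ by $-\ell$'' is not quite the right move since the orientation of $L_j$ relative to $\conv\OmOne$ is fixed). There the key claim --- that the curve $t\mapsto f(t)=\av{\psi}{[0,t]}$ must accumulate on $\partial\OmNull$ as $t\to 0^+$ --- is not justified and is in fact false in general: Lebesgue differentiation controls the averages $\av{\psi}{[\tau,t]}$ anchored at a Lebesgue point $\tau\in(0,1)$, not the one-sided averages $\av{\psi}{[0,t]}$, and if $0$ is not a Lebesgue point the orbit of $f$ near $t=0$ can easily stay in a fixed compact subset of $\omega_j$. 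The paper resolves exactly this via Lemma~\ref{UsefulLemma}, whose proof is not a pointwise argument: at each Lebesgue point $s$ one finds an interval $(s',s]$ whose average lies on $L_j$ (since $\av{\psi}{(0,s]}\in\omega_j$ while $\av{\psi}{[s',s]}\to\partial\OmNull$), one applies the one-dimensional Besicovitch-type covering Lemma~\ref{BabyBesicovitch} to extract a disjoint cover of $(0,1]$ by such intervals up to small measure, and one then expresses $\av{\psi}{[0,1]}$ as an approximate convex combination of points on $L_j$, contradicting $\av{\psi}{[0,1]}\in\omega_j$ for a suitable restriction. (Your observation that $f((0,1))\subset\omega_j$, hence $\overline{f((0,1))}\subset\cl\conv\OmOne\subset\OmNull$ by~\eqref{Second}, is correct and makes the ``$\overline{S_0}\cap\partial\OmNull=\varnothing$'' step trivial; the difficulty you anticipate is not there, but in proving $f$ ever leaves $\omega_j$.) You should either quote Lemma~\ref{UsefulLemma} or reproduce its covering argument; the Lebesgue-point heuristic alone does not close the case.
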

By a non-trivial splitting we mean that the intervals~$I_\pm$ have non-empty interior, intersect by endpoints, and cover~$[0,1]$. Though this lemma is a generalization of Lemma~\ref{Lemma5}, there are some differences. We do not have any widening of the domain (in Lemma~\ref{Lemma5} there is one), and also no uniform control of the smallness of~$I_+$ and~$I_-$. This comes from the fact that the domain~$\Omega$ is no longer closed. In a neighborhood of~$\OmOne$, it is open. As a result, there will be some difficulties with applications of Lemma~\ref{EvenNewer5}. We will need to modify the Bellman induction to a reasoning we call \emph{transfinite Bellman induction}. See Subsection~\ref{s42} for details.

We are ready to formulate our second main result that implies Corollary~\ref{ApplicationInterval}.
\begin{Th}\label{Th2}
Assume~$\OmNull$ and~$\OmOne$ satisfy the requirements~\eqref{First}\textup,~\eqref{Second}\textup,~\eqref{Third}\textup,~\eqref{Fourth}\textup, and~\eqref{Fifth} and the function~$f\colon \partial \OmNull\to \R$ is continuous and bounded from below. The inequality 
\eq{\label{MainIneq}
\av{f(\psi)}{[0,1]}\leq \BG_{\OmConv,f}(\av{\psi}{[0,1]})
}
holds true for any~$\psi \in \Class_{\Om}$ such that~$\av{\psi}{[0,1]}\notin \interior\conv\OmOne$.
\end{Th}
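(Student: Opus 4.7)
The plan is to fix an arbitrary $G \in \Lambda_{\OmConv, f}$, establish the pointwise bound $G(\av{\psi}{[0,1]}) \geq \av{f(\psi)}{[0,1]}$, and then take the infimum over~$G$ to obtain~\eqref{MainIneq}. The engine is the new splitting Lemma~\ref{EvenNewer5} combined with a Bellman-style induction whose novelty is that it must be transfinite.

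First, I would run the splitting lemma iteratively. Starting from~$[0,1]$, where $\av{\psi}{[0,1]} \notin \interior\conv\OmOne$ by hypothesis, Lemma~\ref{EvenNewer5} produces a splitting~$I_+ \cup I_-$ with $[\av{\psi}{I_+}, \av{\psi}{I_-}] \subset \cl\OmNull \setminus \interior\conv\OmOne$. Local concavity of~$G$ on this segment gives
$$G(\av{\psi}{[0,1]}) \geq \frac{|I_+|}{|[0,1]|} G(\av{\psi}{I_+}) + \frac{|I_-|}{|[0,1]|} G(\av{\psi}{I_-}),$$
and each endpoint~$\av{\psi}{I_\pm}$ again lies outside~$\interior\conv\OmOne$, so the splitting lemma may be reapplied. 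After finitely many stages one obtains a partition of~$[0,1]$ into subintervals~$J$ satisfying
$$G(\av{\psi}{[0,1]}) \geq \sum_{J} \frac{|J|}{|[0,1]|} G(\av{\psi}{J}).$$
A subtle but essential sanity check here is that when the segment meets~$\partial\conv\OmOne \setminus \OmOne$ at points of some~$E_j$, local concavity of~$G$ still applies along it because by~\eqref{Fifth} the set~$E_j$ lies in a supporting hyperplane~$L_j$ to~$\conv\OmOne$, so the segment remains inside a convex subset of~$\OmConv \cup E_j$ on which the concavity hypothesis is meaningful.

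The main obstacle, and the departure from the classical argument sketched in Subsection~\ref{s22}, is that Lemma~\ref{EvenNewer5} provides no uniform control on the ratio~$|I_+|/|I_-|$ analogous to the parameter~$\nu$ in Lemma~\ref{Lemma5}. Thus naive finite iteration need not produce partitions whose mesh shrinks, and the Lebesgue differentiation step cannot be invoked directly. My plan, deferred to Subsection~\ref{s42}, is a transfinite induction: well-order the splittings, at each successor ordinal split every leaf whose length still exceeds the current threshold, and at countable limit ordinals collect the remaining family of leaves and continue. Because $[0,1]$ admits only countably many pairwise disjoint subintervals of positive length, the procedure stabilizes at a countable ordinal, producing a family of partitions along which the interval containing almost every~$t \in [0,1]$ shrinks to the point.

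With this in hand I would pass to the limit. By Lebesgue differentiation and continuity of~$f$ on~$\partial \OmNull$, along almost every such shrinking sequence $\av{\psi}{J} \to \psi(t)$ and $G(\av{\psi}{J}) \to f(\psi(t))$. Applying Fatou's lemma, whose hypothesis is guaranteed by the boundedness of~$f$ from below, to the Riemann-type sums~$\sum_J \frac{|J|}{|[0,1]|} G(\av{\psi}{J})$ gives
$$G(\av{\psi}{[0,1]}) \geq \int_0^1 f(\psi(t))\, dt = \av{f(\psi)}{[0,1]}.$$
Taking the infimum over $G \in \Lambda_{\OmConv, f}$ then yields~\eqref{MainIneq} and completes the proof.
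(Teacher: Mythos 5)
Your overall strategy --- split via Lemma~\ref{EvenNewer5}, run a transfinite Bellman induction because the splitting ratio is uncontrolled, and close with Fatou --- is the right one, and matches the paper's proof in spirit. But the plan to argue with an arbitrary $G\in\Lambda_{\OmConv,f}$ and only take the infimum at the very end has a genuine gap. The segment produced by Lemma~\ref{EvenNewer5} lives in $\cl\OmNull\setminus\interior\conv\OmOne$, which is strictly larger than the domain $\OmConv=\cl\OmNull\setminus\conv\OmOne$ of~$G$; in particular it (and even the starting point $\av{\psi}{[0,1]}$ itself) may lie on $\partial\conv\OmOne$, where $G$ is simply undefined. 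Your ``sanity check'' paragraph does not repair this: $E_j$ is not in the domain of $G$, so there is no convex subset of ``$\OmConv\cup E_j$'' on which the concavity hypothesis on $G$ applies. The paper resolves precisely this by working with the minimal function $\BG$ and invoking Proposition~\ref{ContinuationProp}, which identifies $\BG_{\OmConv,f}$ with $\BG_{\OmNull\setminus\interior\conv\OmOne,f}$ --- a locally concave function that \emph{is} defined on the closed set and hence on the whole segment.

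The second gap is in the transfinite step. It is not enough to invoke Fatou once at the final stage: at every countable limit ordinal you must show the Riemann-type sum inequality is preserved when you pass from the chain of stopping sets to the closure of their union. That preservation requires (i) the a.e.~convergence $\E{S_n}\psi\to\E{S}\psi$ of Lemma~\ref{ConvergenceLemma}, (ii) lower semi-continuity of the Bellman function (Proposition~\ref{Continuity}), and (iii) Fatou with a nonnegativity bound that in the paper comes from the cone condition and Lemmas~$2.4$ and~$4.8$ of~\cite{SZ2022}. Of these, lower semi-continuity is established only for $\BG$, not for a general $G\in\Lambda_{\OmConv,f}$; your claim that $G(\av{\psi}{J})\to f(\psi(t))$ along shrinking intervals is also a regularity statement about $G$ at $\partial\OmNull$ that is not free for arbitrary locally concave extensions of~$f$. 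The paper sidesteps the bookkeeping of ordinals altogether via Zorn's lemma: it introduces a partial order on closed stopping sets $S$ with the Bellman average built into the order relation, proves every chain has an upper bound using the three ingredients above, and then observes that a maximal element must be all of $[0,1]$, else Lemma~\ref{EvenNewer5} would refine it. Adopting that formulation, and working with $\BG$ from the start rather than a generic $G$, removes both gaps.
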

\begin{Rem}\label{RemarkConnectivity}
The assertion of the theorem is invalid without the requirement~\eqref{Fifth}. The domain given in Section~$9$ of~\textup{\cite{SZ2022}} provides an example. More specifically\textup, let~$\OmNull = \set{x\in \R^2}{x_1^2 + x_2^2 \leq 1}$ and let~$\OmOne$ be the union of two disks with centers~$(1/2,0)$ and~$(-1/2,0)$ and radii~$0.4$. Let~$f$ be given by the formula~$f(x) = -|x_1|$ and let~$\psi\colon [0,1]\to \R$ attain two values:
\eq{
\psi(t)=
\begin{cases}
(0,-1),\qquad &t\in [0,0.9);\\
(0,1),\qquad &t\in [0.9,1].
\end{cases}
}
Clearly\textup,~$\psi \in \Class_\Om$\textup,~$x=\av{\psi}{[0,1]}\notin \interior\conv\OmOne$\textup, and~$\av{f(\psi)}{[0,1]} = 0$. However, one may see~$\BG_{\OmConv,f}(x)<0$ in this case.
\end{Rem}
\begin{Rem}
The condition that~$f$ is continuous in Theorem~\textup{\ref{Th2}} seems superfluous. At least\textup, it may be replaced with the assumption that~$f$ is lower semicontinuous without changing the proofs much.
\end{Rem}
\begin{Rem}
The domain~$\conv\OmOne$ is not strictly convex. Thus\textup, it does not fall under the scope of the theory of~\textup{\cite{ISVZ2023}} even in the case~$d=2$. However\textup, the heuristics provided by that paper makes the computation of~$\BG_{\OmConv,f}$ available as we will show in the following subsection. 
\end{Rem}

\subsection{Proofs of the corollaries}\label{s24}
\begin{proof}[Proof of Corollary~\textup{\ref{ApplicationInterval}}]
By Theorem~\ref{Th2}, to prove the estimate, it suffices to show the finiteness of the minimal locally concave function~$\BG_{\OmConv,f}$ for~$\Om$ given in~\eqref{DomainForStrangeBMO}
and the boundary values~$f(x) = e^{\mu x_1}$. Call this function simply~$\BG$. We will suggest a formula for~$\BG$. Before that we note the natural homogeneity identity
\begin{figure}[h!]
\centerline{\includegraphics[height=9cm]{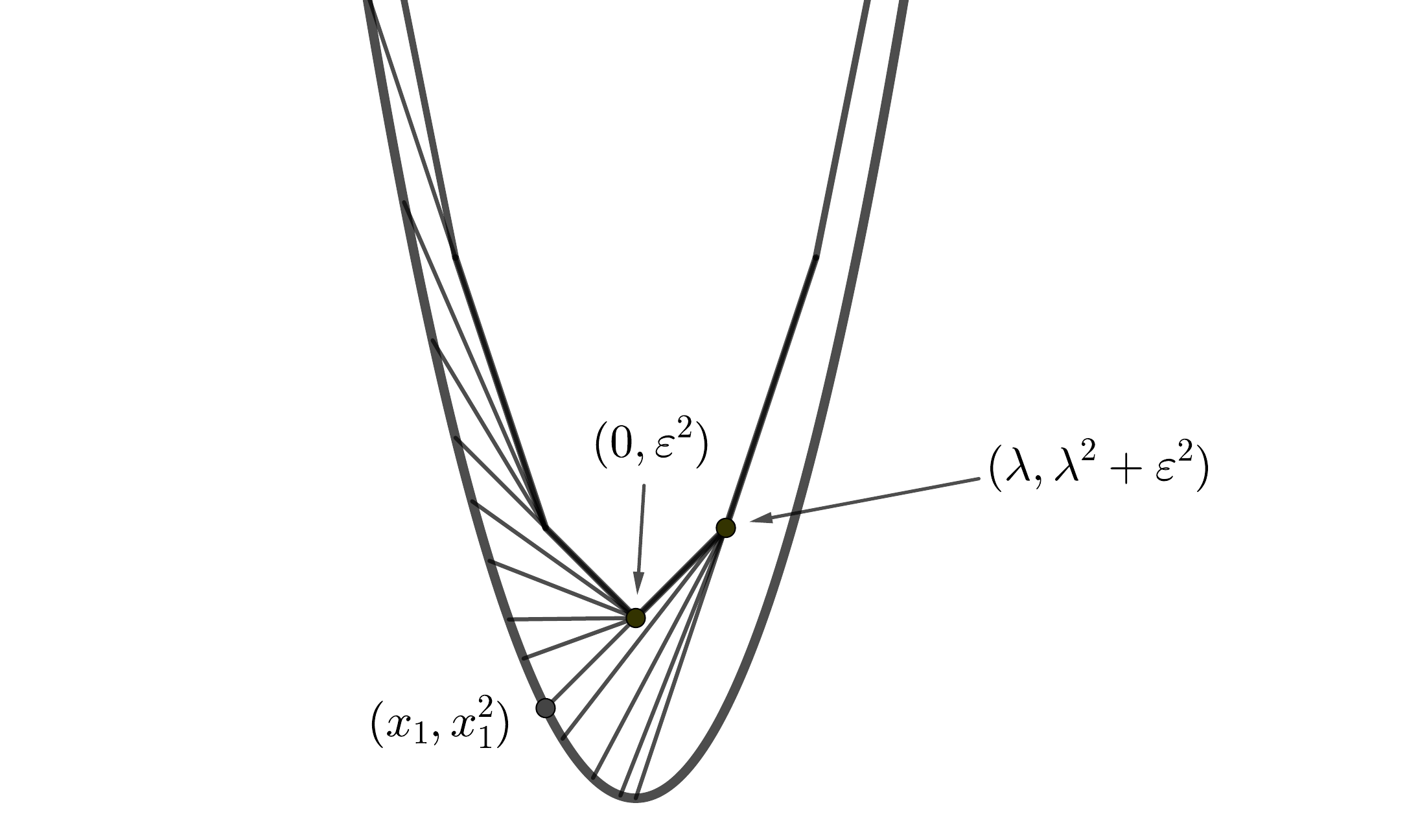}}
\caption{Foliation of the Bellman function}
\label{Fig_Fol}
\end{figure}
\eq{\label{Homogeneity}
\BG(x_1+\lambda,x_2 + 2x_1\lambda + \lambda^2) = e^{\lambda\mu}\BG(x_1,x_2),\qquad x\in \Omega.
}
Observe that the curve~$t\mapsto (t,t^2,e^{\mu t})$ has positive torsion. The theory from~\cite{ISVZ2023} (Section~$3.3$ of that paper) suggests~$\BG$ is affine along the segments in~$\OmConv$  drawn on Fig.~\ref{Fig_Fol}. The splitting of~$\OmConv$ is called a \emph{foliation}. Here it consists of segments connecting points on the boundary~$\partial\OmNull$ with the points~$(\lambda n, \lambda^2 n^2 + \eps^2)$. If the boundary~$\partial \conv\OmOne$ was smooth, the foliating segments would be tangent to it. Though the theory from~\cite{ISVZ2023} applies to domains with sufficiently smooth boundary, in the case where the boundary curve has positive torsion it may be also applied when the upper boundary is non-smooth. The reason is that the torsion condition is imposed on the lower boundary. Let~$(x_1,x_1^2)$ be the point of intersection of the line passing through~$(0,\eps^2)$ and~$(\lambda,\lambda^2 + \eps^2)$ with the fixed boundary. Then,
\eq{
x_1=\frac{\lambda}{2} - \sqrt{\frac{\lambda^2}{4} + \eps^2}.
}
Using that~$\BG$ is affine on the segment~$[(x_1,x_1^2), (\lambda,\lambda^2 + \eps^2)]$ together with~\eqref{Homogeneity}, we see that
\eq{
\BG(\lambda n, \lambda^2 n^2 + \eps^2) = \frac{\lambda}{\lambda/2 + \sqrt{\lambda^2/4 + \eps^2} + (\lambda/2 - \sqrt{\lambda^2/4 + \eps^2}) e^{\lambda \mu}} e^{\mu(\lambda n + \lambda/2 - \sqrt{\lambda^2/4 + \eps^2})},\ n \in\mathbb{Z}.
}
The condition~\eqref{SummabilityCondition} is needed for this formula to provide a positive value. The values at other points in~$\Omega$ are restored by linearity from the formula above and the foliation. We leave the verification of the local concavity of~$\BG$ to the reader. The summability of~$e^{\mu \varphi}$ is proved.

To prove the sharpness, we consider a specific function~$\varphi$; the construction is again suggested by the general theory in Chapter~$5$ of~\cite{ISVZ2023}. To this end, we set
\eq{
a= 1 - \frac{\lambda}{\lambda/2 + \sqrt{\lambda^2/4 + \eps^2}}
}
and define the function~$\varphi\colon [0,1]\to\R$ as
\eq{
\varphi(t) = \Big(n+\frac12\Big)\lambda - \sqrt{\frac{\lambda^2}{4} + \eps^2},\qquad t\in [a^{n+1},a^n],\quad n\geq 0.
}
One may see that~$\av{e^{\mu \varphi}}{[0,1]} = \BG(0,\eps^2)$ and that the curve
\eq{
\gamma(t) = \big(\av{\varphi}{[0,t]}, \av{\varphi^2}{[0,t]}\big)
}
goes along the upper boundary of~$\OmConv$ from~$+\infty$ downto~$(0,\eps^2)$. Thus, by the theory in Chapter~$5$ of~\cite{ISVZ2023} (namely, by Corollary~$5.1.4$), we have~$\varphi \in \Class_{\OmNull \setminus \interior \conv\OmOne}$. This proves the sharpness of our bounds.
\end{proof}
\begin{proof}[Proof of Corollary~\textup{\ref{ApplicationCircle}}]
To prove the estimate, consider the corresponding vectorial function~$\psi\colon \T\to \R^2$ given by the rule~$\psi(t) = (\varphi(t),\varphi^2(t))$. Then,~$\psi \in \ClassC_{\Om}$, where~$\Om$ is given by~\eqref{DomainForStrangeBMO}. By Theorem~\ref{Th1},~$\av{\psi}{\T}\notin \interior\conv\OmOne$ and thus, we are in position to apply Theorem~\ref{Th2}. This theorem shows~$e^{\mu \varphi}$ is summable, provided~$\mu$ satisfies~\eqref{SummabilityCondition}. 

To prove the sharpness, we simply invoke Lemma~$3.3$ in~\cite{SZ2021} that constructs a function~$\tilde{\psi}\in \ClassC_{\tOm}$ such that~$\av{e^{\mu\tilde{\psi}_1}}{\T} \geq \BG_{\tOm,e^{\mu\cdot}}(\av{\tilde{\psi}}{\T})$. Here~$\tOm$ is a domain formed by the same~$\OmNull$ and a convex set~$\tOmOne$ that is slightly smaller than~$\conv\OmOne$.
\end{proof}

\section{Proof of Theorem~\ref{Th1}}\label{S3}

\subsection{Proof}\label{S31}
Assume the contrary, let~$x = \av{\psi}{[0,1]} \in \interior (\conv \OmOne)\setminus \OmOne$ for some~$\psi \in \ClassC_{\Om}$. 
We will implicitly identify the function~$\psi\colon \T\to \R^d$ with its periodization~$\psi_\per\colon \R\to \R^d$ and hope this will not lead to ambiguity. 

By definition,~$x\in \omega_j$ for some~$j$. Recall the sets~$E_j$ in~\eqref{Fifth} and define a stopping time~$\tau(t)$:
\eq{
\tau(t) = \inf \Set{s\in \R}{s > 0, \quad \av{\psi}{[t,t+s]} \in E_j},\qquad t\in \R.
} 
From now till the end of the proof we omit the index~$j$ in our notation. We will be using a special notion of a Lebesgue point, see Subsection~\ref{AS1} for the definition. First,~$\tau(t) > 0$ for any Lebesgue point~$t$ since the curve~$s\mapsto  \av{\psi}{[t,t+s]}$,~$s\in (0,1]$, tends to~$\psi(t) \in \partial \OmNull$ as~$s\to 0+$ and the set~$E$ is separated from~$\partial \OmNull$ by \eqref{Second}. Second,~$\tau(t) \leq 1$ for a Lebesgue point~$t$, since the said curve connects~$x$ to~$\partial\OmNull$ and, thus, intersects~$E$ (it intersects~$\partial \omega$ by continuity and does not intersect~$\OmOne$ by the requirement~$\psi \in \ClassC_{\Om}$). 

Pick some~$\delta > 0$ and consider the set
\eq{
\tilde{U}_\delta = \set{t\in\R}{t \ \text{is a Lebesgue point of}\ \psi \ \text{ and}\ \tau(t) > \delta}.
}
By the continuity of measure,~$|[0,1]\setminus \tilde{U}_\delta|\to 0$ as~$\delta \to 0$.  
Let~$U_\delta \subset \tilde{U}_\delta$ be a closed~$1$-periodic set such that
\eq{
|[0,1] \setminus U_\delta| \leq 2|[0,1]\setminus \tilde{U}_\delta| + \delta.
}
In particular,~$|[0,1] \setminus U_\delta| \to 0$ as~$\delta \to 0$.

Construct the sequence~$\{t_n\}_{n\in \mathbb{N}\cup \{0\}}$ by the following rule:
\eq{
t_{n+1} = \inf\Set{u\in \R}{u \geq t_n + \tau(t_n)\ \text{and} \ u\in U_\delta};
}
let also~$t_0 = 0$ (without loss of generality, we assume~$0\in U_\delta$). This sequence has three simple properties:
\begin{enumerate}[1)]
\item $\av{\varphi}{[t_n,t_n + \tau(t_n)]}\in E$;
\item $t_{n+1} \geq t_n + \delta$;
\item $[t_n + \tau(t_n), t_{n+1}]\subset \R\setminus U_\delta$.
\end{enumerate}
The second property, in particular, implies that~$t_n\to \infty$. Let~$N$ be a large number. Then,
\eq{\label{Decomposition}
\av{\psi}{[0,t_N]} = \frac{\sum_0^{N-1}\tau(t_n)}{t_N}\sum\limits_{n=0}^{N-1}\frac{\tau(t_n)}{\sum_{k=0}^{N-1}\tau(t_k)}\av{\psi}{[t_n,t_n + \tau(t_n)]} + \frac{1}{t_N}\sum\limits_{n=0}^{N-1}\int\limits_{t_n + \tau(t_n)}^{t_{n+1}}\psi(s)\,ds.
} 
The second summand may be estimated using the third property of the sequence~$\{t_n\}$:
\eq{
\Big|\frac{1}{t_N}\sum\limits_{n=0}^{N-1}\int\limits_{t_n + \tau(t_n)}^{t_{n+1}}\psi(s)\,ds\Big| \leq \frac{1}{t_N}\int\limits_{[0,t_N]\setminus U_\delta}|\psi(s)|\,ds \leq \frac{t_N + 1}{t_N}\mathrm{Er},
}
where~$\mathrm{Er} = \int_{[0,1] \setminus U_\delta}|\psi|$. By the absolute continuity of the integral,~$\mathrm{Er}\to 0$ as~$\delta \to 0$.

Note that the first sum in~\eqref{Decomposition} (disregard the coefficient~$t_N^{-1}\sum_0^{N-1}\tau(t_n)$) belongs to~$\conv E$, which lies in~$L$ by~\eqref{Fifth}. The coefficient, in its turn, can be estimated as
\eq{
\frac{\sum_0^{N-1}\tau(t_n)}{t_N} \geq 1 - \frac{|[0,t_N] \setminus U_\delta|}{t_N} \geq 1-\frac{t_N+1}{t_N}|[0,1]\setminus U_\delta|.
} 
In particular, it tends to one as~$\delta \to 0$. To come to a contradiction, it remains to make the right choice of the parameters. We choose~$\delta$ so small that the distance from~$\lambda x$ to~$L$ is larger than~$2\mathrm{Er}$ for any~$\lambda \in [1- 4\delta, (1-4\delta)^{-1}]$ and then pick sufficiently large~$N$. Then, the inequalities above lead us to contradiction with the fact that the first sum in~\eqref{Decomposition} lies in~$L$ and the whole expression converges to~$x\in \interior \conv\OmOne$. \qed

\subsection{A useful lemma}\label{s32}
The following lemma is needed for the proof of Theorem~\ref{Th2}. The proof of the lemma is based on the same circle of ideas as that of Theorem~\ref{Th1}.
\begin{Le}\label{UsefulLemma}
Let~$\psi \in \Class_{\Om}$ be defined on~$[0,1]$. Let~$\av{\psi}{[0,1]} = x$ belong to~$\omega_j$ for some~$j$. For any~$\eps > 0$ there exists~$t\in (0,\eps)$ such that~$\av{\psi}{[0,t]}\notin \omega_j$.
\end{Le}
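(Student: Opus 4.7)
The plan is to argue by contradiction, mimicking the proof of Theorem~\ref{Th1} but on a finite subinterval. Suppose there exists $\eps>0$ with $\gamma(t):=\av{\psi}{[0,t]}\in\omega_j$ for every $t\in(0,\eps)$; by the dilation invariance of Remark~\ref{InvarianceRemark}, I may rescale so that $\eps=1$, and then $\gamma(t)\in\omega_j$ for all $t\in(0,1]$ (at $t=1$ this is the hypothesis of the lemma).

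My first preparatory step is to choose thresholds $t^\dagger<\eps_0<1$ so that $\av{\psi}{[t,\eps_0]}\in\omega_j$ for every $t\in[0,t^\dagger]$. This is possible because $\av{\psi}{[t,\eps_0]}=\bigl(\eps_0\gamma(\eps_0)-\int_0^t\psi\bigr)/(\eps_0-t)$ converges to $\gamma(\eps_0)\in\omega_j$ as $t\to 0^+$ (the numerator integral vanishes by absolute continuity) and $\omega_j$ is open. Then, in the spirit of Theorem~\ref{Th1}, for each Lebesgue point $t\in[0,t^\dagger]$ I introduce the stopping time $\tau(t)=\inf\{u>0:\,\av{\psi}{[t,t+u]}\in E_j\}$. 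Positivity $\tau(t)>0$ comes, just as in Theorem~\ref{Th1}, from condition~\eqref{Second} and the convergence of $\av{\psi}{[t,t+u]}$ to $\psi(t)\in\partial\OmNull$ as $u\to 0^+$. Finiteness, in fact $\tau(t)\le\eps_0-t$, uses the choice of $t^\dagger$: the curve $u\mapsto\av{\psi}{[t,t+u]}$ connects $\psi(t)\notin\conv\OmOne$ to $\av{\psi}{[t,\eps_0]}\in\omega_j$, and the assumption $\psi\in\Class_\Om$ prevents it from crossing $\OmOne$, so it must enter $\omega_j$ through~$E_j$.

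I then reproduce the Bellman-style construction of Theorem~\ref{Th1}: fix a small $\delta>0$ and the same $1$-periodic set $U_\delta$ of good Lebesgue points, and build a sequence $t_0<t_1<\cdots<t_M$ in $[0,t^\dagger]$ by $t_{n+1}=\inf\{u\ge t_n+\tau(t_n):u\in U_\delta\}$, stopping at the last $t_M\le t^\dagger$; the lower bound $t_{n+1}-t_n\ge\delta$ caps the number of steps by $\lfloor t^\dagger/\delta\rfloor$. Applying the decomposition~\eqref{Decomposition} to $[0,t^\dagger]$ in place of $[0,t_N]$ yields
\eq{
\gamma(t^\dagger)=\frac{\sum_{n=0}^{M}\tau(t_n)}{t^\dagger}\,y_\delta+\frac{1}{t^\dagger}\int_{B_\delta}\psi(s)\,ds,
}
where $y_\delta\in\conv E_j\subset L_j$ and the residual ``bad'' set $B_\delta\subset[0,t^\dagger]\setminus U_\delta$ has measure $o(1)$ as $\delta\to 0$, so the coefficient in front of $y_\delta$ tends to $1$ and the remaining integral tends to zero. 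Since $\gamma(t^\dagger)$ is a fixed point independent of $\delta$, passing to the limit forces $\gamma(t^\dagger)\in L_j$; but an individual point of $\omega_j\subset\interior\conv\OmOne$ is at strictly positive distance from the supporting hyperplane~$L_j$, contradicting $\gamma(t^\dagger)\in\omega_j$.

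The main obstacle, compared with Theorem~\ref{Th1}, is precisely the absence of a periodic extension that made $t_N\to\infty$ automatic there: on a finite interval one can only iterate boundedly many times, so the stopping time $\tau$ must be known to be finite throughout the construction, which is exactly what the preparatory choice of $\eps_0$ and $t^\dagger$, forcing the endpoint $\av{\psi}{[t,\eps_0]}$ into the target component $\omega_j$, is designed to secure.
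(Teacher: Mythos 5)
Your forward-stopping approach is genuinely different from the paper's: the paper instead moves the \emph{left} endpoint, finding for each $t$ an interval $(s,t]$ with $\av{\psi}{(s,t]}\in E_j$, and then uses Lemma~\ref{BabyBesicovitch} to obtain a disjoint cover of $(0,1]$ by such intervals, so no endpoint problems ever arise. Your preparatory step — choosing $\eps_0$ and $t^\dagger$ so that $\av{\psi}{[t,\eps_0]}\in\omega_j$ for $t\in[0,t^\dagger]$, as a substitute for the periodicity that gives $\tau\le 1$ in Theorem~\ref{Th1} — is sound, as are the positivity and finiteness of the stopping time. But the decomposition step contains a genuine gap.

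You write the decomposition for $\gamma(t^\dagger)$ and assert $B_\delta\subset[0,t^\dagger]\setminus U_\delta$, but this fails whenever $t_M+\tau(t_M)>t^\dagger$: then the piece $[t_M,t^\dagger]$ is a proper truncation of the last stopping interval, its average is not on $L_j$, and it is not contained in $\R\setminus U_\delta$ either (indeed $t_M\in U_\delta$), so the measure bound on $B_\delta$ is lost. In Theorem~\ref{Th1} the decomposition is taken at a genuine stopping point $t_N$, where the partition closes up exactly; $t^\dagger$ plays no such role. The argument can be repaired by decomposing $\gamma(T_\delta)$ with $T_\delta:=t_M+\tau(t_M)$ instead, for which the partition of $[0,T_\delta]$ is exact and $B_\delta\subset[0,T_\delta]\setminus U_\delta$; but then $T_\delta$ depends on $\delta$, and your ``$\gamma(t^\dagger)$ is a fixed point independent of $\delta$'' step disappears. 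One must instead observe that $U_\delta\cap(T_\delta,t^\dagger]=\varnothing$, so $T_\delta\ge t^\dagger-|[0,1]\setminus U_\delta|$, hence $T_\delta$ eventually lies in a fixed compact subinterval $[c,\eps_0]\subset(0,1]$, and conclude by noting that $\gamma([c,\eps_0])$ is a compact subset of $\omega_j$ at positive distance from $L_j$. The paper's backward-moving intervals avoid this endpoint bookkeeping entirely, which is precisely why that route is taken there.
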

\begin{Rem}
The assumption~$x\in\omega_j$ is formally unnecessary. However\textup, it is convenient for applications.
\end{Rem}
The proof is based upon the following lemma (which is a straightforward consequence of Zorn's lemma).
\begin{Le}\label{BabyBesicovitch}
For any family~$\mathcal{I} = \set{I_x}{x\in (0,1]}$ of nonempty semi-intervals~$I_x = (y,x]\subset (0,1]$\textup, there exists a subset~$C\subset (0,1]$ such that~$\cup_{x\in C} I_x = (0,1]$ and whenever~$y,z\in C$ and~$y\ne z$\textup, the intervals~$I_y$ and~$I_z$ are disjoint. 
\end{Le}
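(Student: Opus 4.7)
The plan is to apply Zorn's lemma, but not to the obvious poset of maximal disjoint subfamilies: a subfamily $C$ that is only maximal with respect to disjointness may fail to cover, because for an uncovered $z$ the interval $I_z$ may overlap some $I_x \in C$ while extending past $x$, so $z$ cannot simply be adjoined. I would instead work with a more rigid poset that forces the union to be an upper tail of $(0,1]$.

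Concretely, let $\mathcal{P}$ consist of those $C \subseteq (0,1]$ for which the intervals $\{I_x\}_{x \in C}$ are pairwise disjoint and $\bigcup_{x \in C} I_x = (u_C, 1]$ for some $u_C \in [0,1]$, ordered by inclusion. The singleton $\{1\}$ lies in $\mathcal{P}$ (with $u_C$ equal to the left endpoint of $I_1$), so $\mathcal{P}$ is non-empty. For any chain $\{C_\alpha\}$ in $\mathcal{P}$, the union $C_\ast=\bigcup_\alpha C_\alpha$ again lies in $\mathcal{P}$: pairwise disjointness is a two-element condition that passes to $C_\ast$ by the chain property, and $\bigcup_{x \in C_\ast} I_x = \bigcup_\alpha (u_{C_\alpha},1] = (\inf_\alpha u_{C_\alpha},1]$, so the required shape is preserved, and hence $C_\ast$ is an upper bound of the chain.

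Zorn then produces a maximal element $C \in \mathcal{P}$ with $\bigcup_{x \in C} I_x = (u,1]$, and it remains to show $u = 0$. If instead $u > 0$, then $u \in (0,1]$, so the family $\mathcal{I}$ contains an interval $I_u = (y_u, u]$ with $y_u < u$. Since $I_u \subseteq (0,u]$ while every $I_x$ with $x \in C$ lies in $(u,1]$, appending $u$ to $C$ gives $C' \in \mathcal{P}$ with union $(y_u, 1]$, strictly extending $C$ and contradicting maximality. The only delicate point is the choice of poset: tracking disjointness alone is not enough for maximality to force coverage, whereas building the shape $(u_C,1]$ into the definition makes both the chain bound and the extension step automatic.
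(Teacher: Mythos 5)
Your proof is correct, and since the paper merely declares Lemma~\ref{BabyBesicovitch} to be ``a straightforward consequence of Zorn's lemma'' without supplying an argument, you have in effect filled in the intended proof. Your choice of poset is exactly the right refinement: requiring $\bigcup_{x\in C} I_x$ to be an upper tail $(u_C,1]$ is what makes the extension step at a maximal element go through, and your remark that naive maximality under mere pairwise disjointness does not force coverage is well taken --- one can construct families where a disjoint subfamily leaves a gap strictly inside $(0,1]$ yet cannot be enlarged, because every interval ending in the gap overlaps an already chosen one. The chain step ($\bigcup_\alpha (u_{C_\alpha},1] = (\inf_\alpha u_{C_\alpha},1]$) and the termination step (if $u>0$, append $u$ itself, noting $u\notin C$ since $u\notin (u,1]$) are both sound.
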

In other words, the lemma provides us with disjoint  intervals whose union is~$(0,1]$; this may be thought of as a one-dimensional version of the Besicovitch covering theorem.
\begin{proof}[Proof of Lemma~\textup{\ref{UsefulLemma}}.]
Assume the contrary. Without loss of generality, assume~$\eps = 1$ and~$\av{\psi}{[0,t]} \in\omega$ for any~$t\in (0,1]$. Let~$\mathfrak{L}$ be the set of Lebesgue points of~$\psi$ in~$(0,1]$. Similar to the proof in the previous subsection, we disregard the index~$j$ and write~$\omega$,~$E$, and~$L$ for~$\omega_j$,~$E_j$, and~$L_j$, respectively. For any~$t\in \mathfrak{L}$ there exists a non-empty interval~$I_t = (s,t]$ such that~$\av{\psi}{I_t}\in E$ (since~$\av{\psi}{(0,t]}\in \omega$ and~$\av{\psi}{[s,t]}\to \partial \OmNull$ as~$s\to t-$). Consider a disjoint family~$\{J_i\}$ of open subintervals of~$(0,1]$ that cover~$(0,1]\setminus \mathfrak{L}$ and whose total length does not exceed~$\delta$. Here~$\delta$ is a small number to be chosen later. Any point~$t \notin \mathfrak{L}$ belongs to some~$J_i = (a_i,b_i)$. Set~$I_t = (a_i,t]$.

We apply Lemma~\ref{BabyBesicovitch} and choose an at most countable disjoint subfamily~$\{I_i\}_i$ from our family~$\{I_t\}_{t\in (0,1]}$. Let those intervals~$I_i$ that correspond to the Lebesgue points be called~$g_i$, and those that correspond to non-Lebesgue points be called~$b_i$. Then,
\eq{
\sum_i|b_i| \leq \delta,\qquad \forall i\quad \av{\psi}{g_i} \in L.
}

It remains to apply a reasoning similar to the one presented in the proof of Theorem~\ref{Th1}. We express
\eq{
x=\av{\psi}{[0,1]}=\int\limits_{\cup_ib_i}\psi + \sum\limits_{i}|g_i|\sum\limits_{k}\frac{|g_k|}{\sum_{i}|g_i|}\av{\psi}{g_k}.
}
The first summand is small by the absolute continuity of~$\psi$, the second summand (disregarding the coefficient~$\sum_{i}|g_i|$) belongs to~$L$, and the coefficient~$\sum_{i}|g_i|$ is close to~$1$. This provides a contradiction to the fact that~$x\notin L$.
\end{proof}

\section{Proof of Theorem~\ref{Th2}}\label{S4}
\subsection{Proof of the splitting lemma}\label{s41}
We provide the proof of Lemma~\ref{EvenNewer5}. Without loss of generality (see Remark~\ref{InvarianceRemark}), assume~$I = [0,1]$. Consider two cases.

\paragraph{Case A: $x \notin \conv \OmOne$.} This case is similar to Lemma~\ref{Lemma5}. Consider the splitting~$I_+(t) = [0,t]$ and~$I_-(t) = [t,1]$ generated by~$t\in (0,1)$ and write~$x_{\pm} = x_{I_{\pm}}$ for brevity (recall~$x_J$ denotes~$\av{\psi}{J}$). Note that~$x_{\pm}$ are continuous functions of~$t$. We start with~$t = 1/2$. If~$[x_+,x_-] \cap \interior \conv \OmOne = \varnothing$ for~$t=1/2$, then we are done. In the other case, one of the segments~$[x_+,x)$ and~$(x,x_-]$ intersects~$\interior \conv \OmOne$. By convexity, \emph{at most} one of these segments can intersect~$\interior \conv \OmOne$. Without loss of generality, let~$[x_+,x)\cap \interior \conv \OmOne \ne \varnothing$.  Now, consider arbitrary~$t\in (0,1)$. Then,~$x_+(t) \to x$ as~$t \to 1$ and, therefore,~$[x_+(t),x]$ does not intersect~$\interior \conv \OmOne$ for~$t$ sufficiently close to~$1$. Let~$t_0$ be the smallest~$t > 1/2$ such that~$[x_+(t),x] \cap \interior \conv \OmOne = \varnothing$. In such a case,~$[x_+(t_0),x] \cap \conv\OmOne \ne \varnothing$, and, again by convexity,~$[x,x_-(t_0)]\cap \conv\OmOne = \varnothing$. We may set~$I_+ := I_+(t_0)$ and~$I_-:= I_-(t_0)$.

\paragraph{Case B: $x\in \conv \OmOne$.} In such a case,~$x\in L_j$ for some~$j$, by~\eqref{BoundaryStructure} and since~$x\notin \interior \conv \OmOne$. By Lemma~\ref{UsefulLemma}, there exist~$a$ and~$b$ such that~$0 < a < 1/2 < b < 1$ and
\eq{
A = \av{\psi}{[0,a]} \notin \omega_j,\qquad B =\av{\psi}{[b,1]} \notin \omega_j.
}
In what follows, we say that a point lies above (or below)~$L_j$ if it lies in the same (opposite) closed half-space as~$\OmOne$ with respect to~$L_j$; if we mean open half-space, then we say `strictly above' or `strictly below'. We skip the index~$j$ in our notation till the end of the proof for brevity. We wish to find~$t\in (0,1)$ such that~$\av{\psi}{[0,t]}\in L$; this will provide the desired splitting. Consider two subcases.

\paragraph{Subcase $1$: either~$A$ or~$B$ lie above~$L$.} Without loss of generality, assume~$A$ lies strictly above~$L$ (the case where~$A\in L$ is evident). Consider the curve~$\gamma \colon [a,1]\to \R^d$ given by the rule
\eq{
\gamma(t) = \av{\psi}{[0,t]},\qquad t\in [a,1].
}
The curve~$\gamma$ connects~$A$ to~$x$. Set
\eq{
t_0 = \inf \set{s \in [a,1]}{\gamma(s) \in L}.
}
It suffices to show~$t_0 < 1$. Assume the contrary, let~$t_0 = 1$. In such a case,~$\gamma(s)$ cannot lie below~$L$ for any~$s\in (a,1)$ since~$\gamma(a)$ lies strictly above. Then,~$\gamma(s)\in \omega$ for~$s$ sufficiently close to~$1$ by Remark~\ref{StructuralRemark}. Since~$A$ lies outside~$\omega$, there exists a point~$t$ such that~$\gamma(t) \in E \subset L$. This leads us to a contradiction and shows that~$t_0 \in [a,1)$. 

\paragraph{Subcase $2$: both~$A$ and~$B$ lie below~$L$.} In this case,~$\av{\psi}{[0,b]}$ is above~$L$, which yields there exists~$t_0\in [a,b]$ such that~$\av{\psi}{[0,t_0]}\in L$. We set~$I_+ = [0,t_0]$ and~$I_- = [t_0,1]$.\qed

\subsection{Transfinite Bellman induction}\label{s42}
Before passing to the proof of Theorem~\ref{Th2}, we introduce some notation. For any closed set~$S \subset [0,1]$ that contains~$0$ and~$1$, let~$\INT_S$ be the set of complementary intervals of~$S$.  If~$\psi$ is a summable function on~$[0,1]$ with values in~$\R^d$, we set
\eq{
\E{S}\psi(x) = \begin{cases}
\psi(x),\qquad x\in S;\\
\av{\psi}{I},\qquad x\in I\in \INT_S.
\end{cases}
}
In other words,~$\E{S} \psi$ is equal to the conditional expectation of~$\psi$ with respect to the~$\sigma$-field generated by the intervals in~$\INT_S$ and measurable subsets of~$S$. Note that if~$S_1 \subset S_2$ are closed sets, then
\eq{\label{Hereditary}
\E{S_1}\psi = \E{S_1}(\E{S_2}\psi).
}
\begin{Le}\label{ConvergenceLemma}
Let~$S_1 \subset S_2\subset S_3\ldots \subset S_n\subset \ldots$ be an increasing sequences of closed subsets of~$[0,1]$ \textup(as usual, we assume~$\{0,1\}\subset S_1$\textup). Let~$S= \cl(\cup_nS_n)$. Then\textup,~$\E{S_n}\psi(x)\to \E{S}\psi(x)$ for almost all~$x\in [0,1]$\textup, provided~$\psi$ is a summable function on~$[0,1]$.
\end{Le}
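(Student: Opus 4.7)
I would show pointwise convergence $\E{S_n}\psi(x)\to\E{S}\psi(x)$ for almost every $x\in[0,1]$ by a direct case analysis, avoiding any appeal to a general martingale convergence theorem. The interval $[0,1]$ partitions into the three disjoint pieces $\bigcup_n S_n$, $[0,1]\setminus S$, and $S\setminus\bigcup_n S_n$, and I would treat each separately.

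On $\bigcup_n S_n$ the conclusion is trivial: for $x\in S_N$ and $n\geq N$ we have $\E{S_n}\psi(x)=\psi(x)=\E{S}\psi(x)$. The main case is $x\in[0,1]\setminus S$, where $x$ lies in a unique complementary interval $I=(a,b)\in\INT_S$ and, for each $n$, in a unique $J_n=(\ell_n,r_n)\in\INT_{S_n}$; since $I\cap S_n=\varnothing$ we have $I\subset J_n$ and the $J_n$ are decreasing in $n$. The key claim is $\ell_n\uparrow a$ and $r_n\downarrow b$. The upper bound $\ell_n\leq a$ is automatic because $(a,x)\subset I$ contains no point of $S_n$. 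For the matching lower bound, either $a$ lies in some $S_N$ (whence $\ell_n\geq a$ for $n\geq N$), or $a\in S=\cl(\bigcup_m S_m)$ is a limit point of $\bigcup_m S_m$; in the latter case the approximating points cannot come from the right, since a right neighborhood of $a$ of radius $<b-a$ is contained in $I\subset[0,1]\setminus S$ and hence contains no element of $\bigcup_m S_m$, so they must come from the left, forcing $\ell_n\to a$. Symmetric reasoning handles $r_n\downarrow b$. Consequently $|J_n|\to|I|$ and $\int_{J_n}\psi\to\int_I\psi$ by dominated convergence, so $\E{S_n}\psi(x)=\av{\psi}{J_n}\to\av{\psi}{I}=\E{S}\psi(x)$.

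The final case $x\in S\setminus\bigcup_n S_n$ splits according to how $x$ is approached by $\bigcup_m S_m$. If the approach is one-sided, then by the same open-neighborhood argument the opposite side of $x$ lies entirely in $[0,1]\setminus S$, so $x$ is an endpoint of some complementary interval of $S$; the set of all such endpoints is countable and hence negligible. If the approach is two-sided, then $\ell_n\uparrow x$ and $r_n\downarrow x$, so $|J_n|\to 0$, and the one-dimensional Lebesgue differentiation theorem for (not necessarily centered) intervals containing $x$, discussed in Appendix~\ref{AS1}, yields $\av{\psi}{J_n}\to\psi(x)=\E{S}\psi(x)$ at every Lebesgue point.

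The main technical obstacle I anticipate is the two-sided convergence $\ell_n\uparrow a$, $r_n\downarrow b$ in the middle case: since $[0,1]\setminus\bigcup_n S_n$ may be strictly larger than $[0,1]\setminus S$, a priori $\bigcap_n J_n$ could properly contain $I$, and one must exploit the specific density of $\bigcup_n S_n$ near every endpoint of every complementary interval of $S$ to rule this out. Once this geometric point is settled, the rest of the argument is essentially a packaging of the Lebesgue differentiation theorem.
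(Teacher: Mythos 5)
Your proof is correct and follows essentially the same strategy as the paper: the same three-way split of $[0,1]$ into $\bigcup_n S_n$, the complement of $S$, and $S\setminus\bigcup_n S_n$, the same discarding of the countable set of endpoints of complementary intervals of $S$, and the same appeal to the Lebesgue differentiation theorem for shrinking (not necessarily centered) intervals. The only minor variation is that the paper routes the last case through the identity $\E{S_n}\psi=\E{S_n}(\E{S}\psi)$ and Lebesgue points of $\E{S}\psi$, while you apply Lebesgue differentiation directly to $\psi$ using $\E{S}\psi(x)=\psi(x)$ on $S$; both are equally valid.
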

\begin{proof}
Let~$\Leb$ be the set of Lebesgue points of~$\E{S}\psi$. Consider several cases.

\paragraph{Case~$x\notin S$.} Then there exists~$I \in \INT_S$ such that~$x\in I$. There exist intervals~$I_n \in \INT_{S_n}$ that converge to~$I$. Then, by the continuity of the integral,
\eq{
\E{S_n}\psi(x) \Eeqref{Hereditary} \av{\psi}{I_n}\to \av{\psi}{I} = \E{S}\psi(x),\qquad n\to\infty.
}

\paragraph{Case~$x \in \cup_n S_n$.} In such a case,~$\E{S_n}\psi(x) = \E{S}\psi(x)$ for sufficiently large~$n$.

\paragraph{Case~$x\in S\setminus \cup_n S_n$.} First, we may disregard the points~$x\in [0,1]$ that are the endpoints of the intervals~$I\in \INT_S$ since there are at most countable number of them. Second, we may disregard the points~$x\notin \Leb$ since they form a set of measure zero. For the remaining points, there exists a sequence~$\{I_n\}_n$ such that for each~$n\in \mathbb{N}$ we have~$x\in I_n \in \INT_{S_n}$. Since~$S_{n} \subset S_{n+1}$, we also have~$I_{n+1}\subset I_n$. Therefore,~$\{I_n\}_n$ either shrinks to~$x$ or converges to an interval~$I \in \INT_S$. In the latter case,~$x$ is the endpoint of~$I$ since we have assumed~$x\in S$. In the former case, the desired convergence follows from~\eqref{Hereditary} and the fact~$x$ is the Lebesgue point of~$\E{S}\psi$:
\eq{
\E{S_n}\psi(x) \Eeqref{Hereditary}\E{S_n}(\E{S}\psi)(x) = \av{\E{S}\psi}{I_n} \to \E{S}\psi(x). 
} 
\end{proof}
Fix a function~$\psi$ as in Theorem~\ref{Th2}. Consider a set collection~$\F$:
\eq{
\F = \Set{S\subset [0,1]}{S \ \text{is closed},\ \{0,1\}\subset S,\ \forall I \in \INT_S \quad \av{\psi}{I}\notin \interior\conv\OmOne}.
} 
We will also use the quantity
\eq{
\ch S = \max\Set{|I|}{I\in \INT_S}.
}
\begin{Rem}\label{ClsdRm}
Let~$S_1,S_2,\ldots, S_n,\ldots$ be a set sequence as in Lemma~\textup{\ref{ConvergenceLemma},} let~$S = \cl (\cup_n S_n)$. If all~$S_n$ belong to~$\F$, then~$S\in \F$.  What is more\textup,~$\ch S = \lim_n \ch S_n$.
\end{Rem}
Before passing to the proof of Theorem~\ref{Th2}, we note that the functions~$\BG_{\OmConv,f}$ and~$\BG_{\OmNull \setminus \interior \conv \OmOne,f}$ coincide on their common domain (see Proposition~\ref{ContinuationProp} in Appendix). We denote the latter function by~$\BG$ for brevity.
\begin{proof}[Proof of Theorem~\textup{\ref{Th2}}.] We wish to apply Zorn's lemma and, thus, introduce a partial order on the set~$\F$. We will say that~$S_1\prec S_2$,~$S_1,S_2 \in \F$ if either~$S_1 = S_2$ or all the three conditions below hold:
\begin{enumerate}[1)]
\item $S_1 \subset S_2$;
\item $\ch S_2 < \ch S_1$;
\item $\av{\BG(\E{S_2}\psi)}{[0,1]} \leq \av{\BG(\E{S_1}\psi)}{[0,1]}$.
\end{enumerate}
To apply Zorn's lemma, we need to verify that every chain has an upper bound. Let~$\{\F_\alpha\}_{\alpha\in \mathfrak{A}}$ be a chain; it will be convenient to rename the indices: let~$\alpha := \ch S_\alpha$. Therefore,~$\mathfrak{A} \subset [0,1]$ and~$S_\alpha \prec S_{\beta}$, provided~$\alpha > \beta$.

Let~$S = \cl (\cup_\alpha S_{\alpha})$. If~$\inf \mathfrak{A}$ is attained at some~$\alpha\in \mathfrak{A}$, then there is nothing to prove. Since~$\ch S = \inf \mathfrak{A}$,~$\ch S < \ch S_{\alpha}$ for any~$\alpha \in \mathfrak{A}$. If we manage to prove that
\eq{\label{LimitingInequality1}
\av{\BG(\E{S}\psi)}{[0,1]} \leq \av{\BG(\E{S_\alpha}\psi)}{[0,1]} 
}
for any~$\alpha$, then~$S_{\alpha} \prec S$ for any~$\alpha$, and the assumptions of Zorn's lemma would be verified (note that~$S\in \F$ by Remark~\ref{ClsdRm}).   To show~\eqref{LimitingInequality1}, let~$\alpha_n$ be a decreasing downto~$\inf \mathfrak{A}$ sequence of indices~$\alpha$. It suffices to show
\eq{\label{MonotonicityLimit}
\av{\BG(\E{S}\psi)}{[0,1]} \leq \lim_{n}\av{\BG(\E{S_{\alpha_n}}\psi)}{[0,1]};
}
note that the limit on the right exists since this is the limit of a non-increasing sequence. The limit assertion~\eqref{MonotonicityLimit}, in its turn, follows from Lemma~\ref{ConvergenceLemma}, the lower semi-continuity of~$\BG$ (see Proposition~\ref{Continuity} in the Appendix), and Fatou's lemma:
\eq{
\av{\BG(\E{S}\psi)}{[0,1]} = \av{\BG(\lim_n\E{S_{\alpha_n}}\psi)}{[0,1]}\leq \av{\lim_n \BG(\E{S_{\alpha_n}}\psi)}{[0,1]} \leq  \lim_{n}\av{\BG(\E{S_{\alpha_n}}\psi)}{[0,1]}.
}
To apply Fatou's lemma, we need that~$\BG$ in non-negative. This follows from the fact~$f$ is non-negative and~\eqref{Third}, see Lemmas~$2.4$ and~$4.8$ in~\cite{SZ2022}.

By Zorn's lemma, there exists a maximal element~$S_{\max}$ in~$\F$ with respect to the chosen order. If~$S_{\max} \ne [0,1]$, the set~$\INT_{S_{\max}}$ is non-empty. Let~$I \in \INT_{S_{\max}}$ be an interval. If we apply Lemma~\ref{EvenNewer5} to it, we get a splitting~$I = I_+\cup I_-$ such that
\eq{
\BG\big(\av{\psi}{I}\big) \geq \frac{|I_{+}|}{|I|}\BG\big(\av{\psi}{I_+}\big) + \frac{|I_{-}|}{|I|}\BG\big(\av{\psi}{I_-}\big)
}
by local concavity of~$\BG$ on~$\OmNull \setminus \interior \conv\OmOne$. Thus, applying Lemma~\ref{EvenNewer5} to each of the largest intervals in~$\INT_{S_{\max}}$, we get a new element of~$\F$ that exceeds~$S_{\max}$, which contradicts its maximality. Thus,~$S_{\max} = [0,1]$. The definition of the order implies~\eqref{MainIneq}.
\end{proof}

\appendix
\section{Appendix}
\subsection{On Lebesgue points}\label{AS1}
We formulate a version of the Lebesgue differentiation theorem we need since it might seem unusual. Let~$\psi \colon \R \to \R^d$ be a locally summable function. We call a point~$t \in \R$ a Lebesgue point of~$\psi$, provided
\eq{
\lim\limits_{I \to t} \av{|\varphi - A|}{I} \to 0 
}
for some fixed number~$A = A(t)$. Here~$I$ is an arbitrary closed interval that contains~$t$. In particular,~$t$ might be the endpoint of~$I$. The convergence~$I \to t$ is the convergence of the endpoints of~$I$ to~$t$.
\begin{Th}[Lebesgue differentiation theorem]\label{OurLebesgue}
Almost all points are Lebesgue points and~$A(t) = \varphi(t)$ for almost all of them.
\end{Th}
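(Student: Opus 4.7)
The plan is to deduce Theorem~\ref{OurLebesgue} from the classical (symmetric) vector-valued Lebesgue differentiation theorem, which asserts that at almost every $t\in\R$ one has $\frac{1}{2h}\int_{t-h}^{t+h}|\psi(s)-\psi(t)|\,ds\to 0$ as $h\to 0^+$. This standard fact itself reduces to the scalar version applied coordinate-wise via $|\psi(s)-\psi(t)|\leq\sum_i|\psi_i(s)-\psi_i(t)|$, so I would take it as given. The only genuinely new content in Theorem~\ref{OurLebesgue} is that the closed interval $I\ni t$ need not be symmetric about $t$ and may even have $t$ as an endpoint.

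The key step is to upgrade symmetric differentiation to one-sided differentiation. Writing
\eq{
\frac{1}{2h}\int_{t-h}^{t+h}|\psi-\psi(t)| = \tfrac12\Big(\frac{1}{h}\int_{t-h}^{t}|\psi-\psi(t)| + \frac{1}{h}\int_{t}^{t+h}|\psi-\psi(t)|\Big),
}
and noting that both summands on the right are non-negative, one sees that their sum tending to zero forces each to tend to zero. Hence at almost every $t$ both one-sided averages vanish in the limit.

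At such a $t$, I would verify the Lebesgue-point property from the definition. Given $\eps>0$, pick $\delta>0$ so that both one-sided averages over intervals of length less than $\delta$ are below $\eps$. For any closed interval $I=[a,b]$ with $a\leq t\leq b$ and $b-a<\delta$, split
\eq{
\int_a^b|\psi-\psi(t)|\,ds = \int_a^t|\psi-\psi(t)|\,ds + \int_t^b|\psi-\psi(t)|\,ds \leq \eps(t-a)+\eps(b-t) = \eps(b-a);
}
the degenerate cases $a=t$ or $b=t$ require no separate treatment since the vanishing sub-integral contributes zero. Dividing by $|I|=b-a$ yields $\av{|\psi-\psi(t)|}{I}\leq\eps$, so $t$ is a Lebesgue point in the sense of the paper with $A(t)=\psi(t)$.

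There is no serious obstacle in this plan: the whole argument is a short reduction, and the asymmetry of the intervals allowed in the definition comes essentially for free once one-sided differentiation is in hand. The only piece worth flagging is the non-negativity trick in the second paragraph, which does all the work; the rest is bookkeeping.
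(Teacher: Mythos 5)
Your proof is correct, but it follows a genuinely different route from the paper. The paper treats the theorem as a pointer to Section~2.2 of de Guzm\'an and sketches the maximal-function approach: the non-centered maximal operator $M\varphi(t)=\sup_{I\ni t}\av{|\varphi|}{I}$ is pointwise dominated by (a constant times) the centered Hardy--Littlewood maximal operator, hence inherits the weak~$(1,1)$ bound, and the differentiation theorem follows by the standard density argument. You instead work directly at the level of the differentiation statement: you take the symmetric (centered) Lebesgue differentiation theorem as given, observe that the centered average is half the sum of the two one-sided averages, and use the non-negativity of each summand to conclude that both one-sided averages vanish almost everywhere; the passage from one-sided to arbitrary closed intervals $[a,b]\ni t$ with $|I|<\delta$ is then the bookkeeping $\int_a^b|\psi-\psi(t)|\leq\eps(t-a)+\eps(b-t)=\eps|I|$, including the degenerate endpoint cases. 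Both arguments are sound. Yours is the more elementary once the centered version is granted and avoids any explicit reference to the maximal operator; the paper's route is the more standard and is what one would reach for if one also wanted the quantitative weak-type estimate (which the paper does not actually need here). One tiny stylistic point: your non-negativity trick proves a.e.~convergence of each one-sided average separately, but you should note explicitly that the exceptional null set for the centered theorem already serves as the exceptional set for your conclusion --- no further null sets need to be discarded, which is implicit but worth stating.
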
 
The details can be found in Section~$2.2$ of~\cite{Guzman1975}. In fact, Theorem~\ref{OurLebesgue} may be deduced in a standard way from the~$L_1 \to L_{1,\infty}$ boundedness of the corresponding maximal operator
\eq{
M \varphi(t) =\sup\limits_{I\colon t \in I}\av{|\varphi|}{I}.
}
This maximal operator admits a pointwise bound by the classical centered Hardy--Littlewood maximal operator, which justifies the required weak-type bound. 

We may set~$\psi(t) := A(t)$ and note that if~$t$ is a Lebesgue point of~$\psi$, then 
\eq{\label{A3}
\lim\limits_{I\to t}\av{\psi}{I} = \psi(t)
}
as a closed interval~$I$ containing~$x$ approaches~$x$. 

\subsection{Auxiliary geometric results}\label{AS2}
The proof of Lemma~\ref{ClosureConvexHull} will be based on a simple observation about convex bodies without rays on the boundary. This condition has already appeared in the related context in~\cite{AS2023}.
\begin{Le}\label{CompactCap}
Let $C\subset\mathbb{R}^d$ be a closed convex set and let $L$ be a hyperplane. If $C\cap L$ is non-empty and does not contain rays\textup, then $C\cap (L+\overline{B_r(0)})$ is a compact set for any $r\geqslant 0$.
\end{Le}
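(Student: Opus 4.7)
Since $C$ and $L+\overline{B_r(0)}$ are both closed, the set $K := C\cap (L+\overline{B_r(0)})$ is closed, so it suffices to show it is bounded. Suppose for the sake of contradiction that $K$ is unbounded. Then there exists a sequence $\{x_n\}_n \subset K$ with $\dist(x_n,L)\leq r$ and $|x_n|\to\infty$. Passing to a subsequence, assume $x_n/|x_n|\to v$ for some unit vector $v$. Writing $L = \{y : \scalprod{\nu}{y} = c\}$ for a unit normal $\nu$, the bound $|\scalprod{\nu}{x_n} - c|\leq r$ divided by $|x_n|$ yields $\scalprod{\nu}{v} = 0$, i.e.\ $v$ is parallel to $L$.

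Now fix any point $x_0\in C\cap L$, which exists by hypothesis. The plan is to deduce that the whole ray $R = \{x_0 + tv : t\geq 0\}$ lies in $C\cap L$, which contradicts the no-rays assumption. For the inclusion $R\subset C$, I use a standard recession cone argument: for fixed $t\geq 0$ and $n$ large enough that $|x_n|\geq t$, write
\eq{
x_0 + t\,\frac{x_n}{|x_n|} = \Big(1-\tfrac{t}{|x_n|}\Big)\,x_0 + \tfrac{t}{|x_n|}\,x_n,
}
which lies in $C$ by convexity. Letting $n\to\infty$ and using that $C$ is closed gives $x_0+tv\in C$. For the inclusion $R\subset L$, note that $x_0\in L$ and $v$ is parallel to $L$, so $x_0+tv\in L$ for every $t\in\R$.

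Thus $R\subset C\cap L$ is a genuine ray, contradicting the hypothesis on $C\cap L$. Hence $K$ is bounded, and the proof is complete. The only step with any content is the recession cone observation above; everything else is bookkeeping.
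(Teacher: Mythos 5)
Your proof is correct and takes essentially the same approach as the paper: both argue that an unbounded sequence in $C\cap(L+\overline{B_r(0)})$ yields (after normalizing and passing to a subsequence) a direction $v$ along which a ray $\{x_0+tv:t\geq 0\}\subset C\cap L$ exists, contradicting the no-rays hypothesis. You merely spell out two steps the paper compresses (that $v$ is parallel to $L$, and the explicit convex-combination identity justifying $x_0+tv\in C$), but the argument is the same.
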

\begin{proof}
Since $C\cap L$ is not empty, there exists a point $x\in C\cap L$. The set~$C\cap(L+\overline{B_r(0)})$ is closed, so we only need to justify its boundedness. Assume the contrary: let there exist a sequence $\{y_n\}_n$ such that~$y_n\in C\cap(L+\overline{B_r(0)})$ and $|y_n|\to+\infty$. Let $\{y_{n_k}\}_k$ be a subsequence such that there exists the limit $e=\lim_{k}\frac{y_{n_k}}{|y_{n_k}|}$. From convexity and closedness of $C$, it follows that 
\eq{
\set{x+\alpha e}{\alpha\in[0,\,+\infty)}\subset C\cap L,
}
which contradicts our assumptions. Thus, $C\cap(L+\overline{B_r(0)})$ is a compact set.
\end{proof}
\begin{proof}[Proof of Lemma~\textup{\ref{ClosureConvexHull}}]
Assume the contrary: there exists a point~$x\in\partial\conv F\setminus\conv F$. Let~$L$ be a supporting hyperplane to~$\cl\conv F$ that contains~$x$. By Lemma~\ref{CompactCap},~$(\cl\conv F)\cap(L+\overline{B_r(0)})$ is compact. Therefore,~$F\cap(L+\overline{B_r(0)})$ is also a compact set. What is more, the sets
\eq{
H_r=\conv\Big(F\cap(L+\overline{B_r(0)})\Big)
}
are compact as well. On the other hand,~$H_r\subseteq\conv F$, which implies~$x \notin H_r$.
\begin{figure}[h]
\center{\includegraphics[scale=0.45]{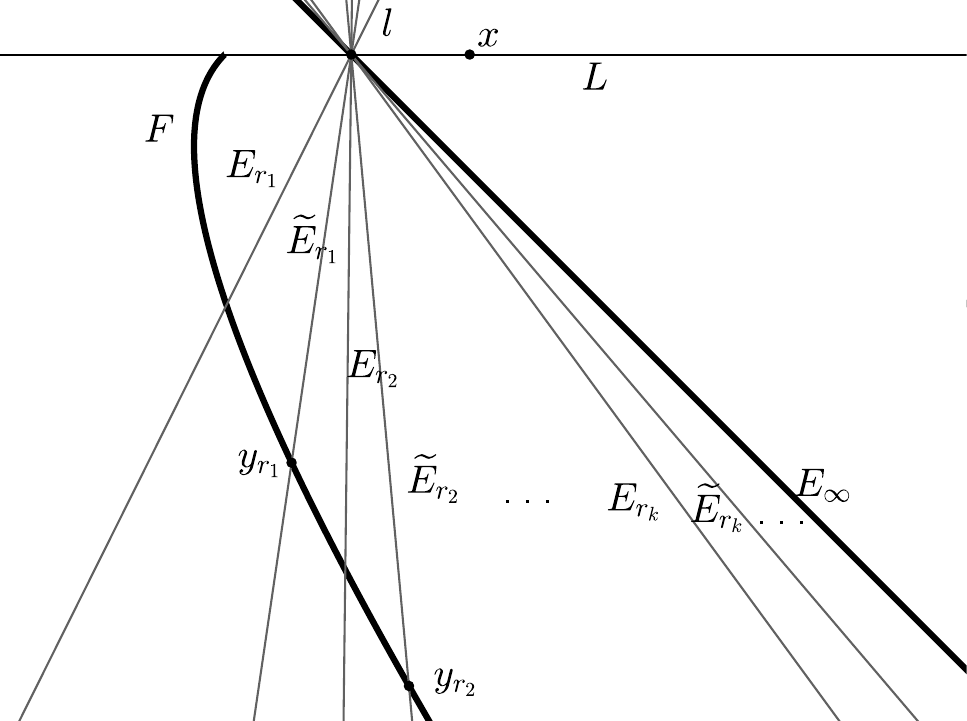}}
\caption{Illustration to the proof of Lemma~\ref{ClosureConvexHull}.}
\end{figure}

The set~$H_0$ is non-empty since~$F\cap L\ne\varnothing$ (if~$F\cap L =\varnothing$, then~$F\cap (L + \overline{B_r(0)}) = 0$ for some~$r > 0$, which contradicts~$x\in \overline{\conv F}$). Let~$\ell$ be a hyperplane of~$L$ that strictly separates~$x$ from~$H_0$. Let~$E_r$ be a hyperplane (of~$\R^d$) that contains~$\ell$ and strictly separates~$x$ from~$H_r$. Such a hyperplane exists since~$x\notin H_r$ and~$H_r$ is compact. Since~$x\in\cl\conv F$, the hyperplane~$E_r$ does not separate~$x$ from~$F$. Consequently, there exists a point~$y_r\in F$ such that~$x$ and~$y_r$ lie in the same half-space~$E_r^+$ with respect to~$E_r$. Since~$y_r\in F\setminus H_r$,~$|y_r|\to \infty$ as~$r\to \infty$.
Let~$\{r_k\}_k$ be a sequence such that $r_k\to+\infty$ and there exists the limit $e=\lim_{k}y_{r_k}/|y_{r_k}|$. Then,
\eq{
\set{x+\alpha e}{\alpha\in[0,\,+\infty)}\subset \cl\conv F.
}
It suffices to check that 
\eq{\label{A5}
\set{x+\alpha e}{\alpha\in[0,\,+\infty)}\subset L
}
to come to contradiction and finish the proof. 

Assume~\eqref{A5} fails. Then, the hyperplane
\eq{
E_{\infty} = \set{\alpha e}{\alpha\in\R}+\ell
} 
is not equal to~$L$ and~$x\notin E_{\infty}$. The hyperplane $L$ splits $\mathbb{R}^d$ into two half-spaces $L^+$ and $L^-$. Let $L^-$ be the half-space that contains~$F$. Then,~$H_r\subset E_r^-\cap L^-$. Let the hyperplane~$\widetilde{E}_{r_k}$ be the affine hull of~$\ell$ and~$y_{r_k}$ and let~$\widetilde{E}_{r_k}^+$ be the half-space that contains~$x$. Then $H_r\subset E_{r_k}^-\cap L^-\subset \widetilde{E}_{r_k}^-\cap L^-$. However,~$E_{\infty}$ is the limit of $\widetilde{E}_{r_k}$. Thus, we have  that $F\subset E_{\infty}^-$ and $x\in E_{\infty}^+$. Therefore,~$E_{\infty}$ strictly separates~$x$ from~$F$. This contradicts the fact that~$x\in\cl\conv F$.
\end{proof}

\begin{St}\label{Continuity}
Assume~$\OmOne$ is a convex set with non-empty interior. Assume~$f\colon \partial \OmNull \to \R$ is continuous. If~$\BG_{\OmNull \setminus \interior\OmOne}$ is finite\textup, then it is lower semi-continuous.
\end{St}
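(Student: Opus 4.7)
The plan is to identify $\BG$ with its lower semi-continuous envelope, using the minimality of $\BG$ in the class $\Lambda_{\Omega,f}$. Set
\[
\tilde G(x) = \liminf_{y \to x,\, y \in \Omega} \BG(y), \qquad x \in \Omega,
\]
so that $\tilde G \leq \BG$ and $\tilde G$ is lower semi-continuous by construction. If one can verify that $\tilde G$ is locally concave on $\Omega$ and that $\tilde G|_{\partial \OmNull} = f$, then $\tilde G \in \Lambda_{\Omega,f}$, and the minimality of $\BG$ yields $\BG \leq \tilde G$; combined with $\tilde G \leq \BG$, this forces $\BG = \tilde G$, so $\BG$ is lower semi-continuous.

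For the boundary values of $\tilde G$ on $\partial \OmNull$, the inequality $\tilde G \leq f$ is automatic. For the reverse, I would use that $\cl \OmOne$ sits strictly inside $\OmNull$, so each $x_0 \in \partial \OmNull$ admits a convex cap $K = \cl \OmNull \cap \overline{B_r(x_0)} \subset \Omega$ on which $\BG$ is concave. The classical property of concave functions---that the value at a boundary point is at most the directional limit from the relative interior---then yields $\liminf_{y \to x_0,\, y \in \interior K} \BG(y) \geq \BG(x_0) = f(x_0)$, while sequences along $\partial \OmNull$ satisfy $\BG(y_n) = f(y_n) \to f(x_0)$ by continuity of $f$. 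Combining these gives $\tilde G(x_0) \geq f(x_0)$.

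The hard part will be the local concavity of $\tilde G$ on $\Omega$. On the topological interior $\OmNull \setminus \cl \OmOne$, $\BG$ is continuous (locally concave on open convex neighborhoods), so $\tilde G = \BG$ there and concavity is inherited. The delicate case is near the free boundary $\partial \OmOne \cap \OmNull$, where approximating sequences can approach a given point from different sides. For a convex subset $V \subset \Omega$ with $x, y \in V$ and $z = \lambda x + (1-\lambda) y \in V$, I would take any sequence $z_n \to z$ in $\Omega$ realizing $\tilde G(z)$ and decompose $z_n = \lambda x_n + (1-\lambda) y_n$ with $x_n \to x$, $y_n \to y$ and $[x_n, y_n] \subset \Omega$ for large $n$, choosing the perturbations of $x$ and $y$ along supporting hyperplanes to $\OmOne$ so that the segments stay outside $\interior \OmOne$ even when $[x,y]$ meets $\partial \OmOne$. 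Local concavity of $\BG$ on $[x_n, y_n]$ then yields $\BG(z_n) \geq \lambda \BG(x_n) + (1-\lambda) \BG(y_n)$, and passing to the liminf, together with $\liminf \BG(x_n) \geq \tilde G(x)$ and $\liminf \BG(y_n) \geq \tilde G(y)$, gives $\tilde G(z) \geq \lambda \tilde G(x) + (1-\lambda) \tilde G(y)$. The main obstacle is the robust construction of such decompositions when $[x,y]$ touches $\partial \OmOne$, which is precisely where the convexity of $\OmOne$ is used essentially through the existence of supporting hyperplanes.
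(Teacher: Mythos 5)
Your regularization strategy (pass to the lower semicontinuous envelope $\tilde G$, verify it lies in $\Lambda_{\Om,f}$, then use minimality to conclude $\BG=\tilde G$) is a genuinely different and more self-contained route than the paper's, which simply points to Propositions~$2.7$ and~$2.8$ of~\cite{SZ2016} and observes that the lower-semicontinuity half of that argument uses only convexity, not strict convexity, of $\OmOne$. Unfortunately the decisive step of your plan is not yet in place.

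A small issue first. On $\partial\OmNull$, the ``classical property of concave functions'' controls only the one-sided limit along a segment issuing from $x_0$ into the cap $K$; upgrading this to a bound on the full $\liminf$ over arbitrary interior sequences already requires tracking where the secant through the approximating point exits $K$ on $\partial\OmNull$ and invoking the continuity of $f$ at that exit point. So the continuity of $f$ enters for interior sequences as well, and the two cases you separate are not independent. This is fixable, but it is essentially the content of Proposition~$2.7$ in~\cite{SZ2016}, not a generic fact about concave functions.

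The genuine gap is the local concavity of $\tilde G$, which you yourself flag as ``the main obstacle'' without resolving it. Take $z\in\partial\OmOne$, a supporting hyperplane $L$ to $\OmOne$ at $z$, and a chord $[x,y]\subset L$ with $z=\lambda x+(1-\lambda)y$. If the sequence $z_n\to z$ realizing $\tilde G(z)$ lies strictly between $L$ and $\partial\OmOne$ (the ``crescent''), then neither of the obvious decompositions works: translating $[x,y]$ by $z_n-z$ slides the supporting hyperplane towards $\OmOne$ and pushes the chord into $\interior\OmOne$, while keeping $x_n,y_n\in L$ forces $\lambda x_n+(1-\lambda)y_n\in L$, which misses $z_n$. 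The existence of a nearby chord through $z_n$ with the prescribed splitting ratio that both avoids $\interior\OmOne$ and converges to $[x,y]$ is precisely the lemma that must be proved, and it is not simply a matter of ``choosing perturbations along supporting hyperplanes.'' Note moreover that this step is essentially equivalent to the theorem itself: pick $x,y$ at positive distance from $\OmOne$, so that $\tilde G(x)=\BG(x)$ and $\tilde G(y)=\BG(y)$, and suppose $\BG$ is affine along $[x,y]$ (which is typical for minimal Bellman functions along chords tangent to the free boundary). Then the concavity inequality $\tilde G(z)\ge\lambda\tilde G(x)+(1-\lambda)\tilde G(y)=\BG(z)$ \emph{is} the lower semi-continuity of $\BG$ at $z$; the envelope trick buys nothing until this decomposition is actually carried out.
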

We will not provide a formal proof of Proposition~\ref{Continuity}. Note that a locally concave function is continuous at inner points of its domain. What is more, if~$f$ is continuous,~$\BG_{\OmNull\setminus \interior\OmOne}$ is also continuous at the points of~$\partial \OmNull$ by Proposition~$2.7$ in~\cite{SZ2016}. Thus, it remains to prove the lower semi-continuity at the points of~$\partial \OmOne$. In the case where~$\OmOne$ is strictly convex, the said continuity was proved in~\cite{SZ2016} (Proposition~$2.8$ of that paper). In fact, the lower and upper semi-continuities were proved separately, and the proof of lower semi-continuity did not use the strict convexity assumption, only the convexity of~$\OmOne$ was needed. Thus, the proof of Proposition~\ref{Continuity} is contained in~\cite{SZ2016}.

The proof of Proposition~$2.8$ in~\cite{SZ2016} does not use the minimality assumption: any finite locally concave function is continuous at the points of~$\partial\OmOne$, provided~$\partial\OmOne$ is strictly convex. This assertion fails if~$\OmOne$ is convex, but not necessarily strictly convex. We do not know whether~$\BG_{\OmNull \setminus \interior \OmOne}$ is continuous in this case.

\begin{St}\label{ContinuationProp}
Assume the set~$\OmOne$ is closed\textup, convex\textup, has non-empty interior\textup, and its boundary does not contain rays. Then\textup,
\eq{\label{FunctionAndItsExtension}
\BG_{\OmNull\setminus \OmOne}(x) = \BG_{\OmNull\setminus \interior \OmOne}(x)
}
for any~$x\in \OmNull\setminus \OmOne$.
\end{St}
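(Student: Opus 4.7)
The plan is to establish the two opposite inequalities.

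\textbf{Easy direction: $\BG_{\OmNull\setminus\OmOne}(x) \le \BG_{\OmNull\setminus\interior\OmOne}(x)$.} Since $\OmNull\setminus\OmOne \subset \OmNull\setminus\interior\OmOne$, every convex subset of the smaller domain is a convex subset of the larger one. Therefore, for any $G \in \Lambda_{\OmNull\setminus\interior\OmOne,f}$, the restriction $G|_{\OmNull\setminus\OmOne}$ remains locally concave and keeps the same boundary values on $\partial\OmNull$. Taking the pointwise infimum over such $G$ yields the bound.

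\textbf{Hard direction: $\BG_{\OmNull\setminus\OmOne}(x) \ge \BG_{\OmNull\setminus\interior\OmOne}(x)$.} The plan is to construct, for every $G \in \Lambda_{\OmNull\setminus\OmOne,f}$, an extension $\tilde G \in \Lambda_{\OmNull\setminus\interior\OmOne,f}$ such that $\tilde G = G$ on $\OmNull\setminus\OmOne$; then $\tilde G(x) = G(x)$, and taking the infimum over $G$ gives the inequality. The candidate extension is the upper semi-continuous regularization
\[
\tilde G(y) := \limsup_{\OmNull\setminus\OmOne \ni z \to y} G(z), \qquad y \in \partial\OmOne \cap \OmNull,
\]
and $\tilde G := G$ elsewhere. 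The boundary condition $\tilde G|_{\partial\OmNull} = f$ is inherited automatically.

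\textbf{Verification of local concavity.} Let $K \subset \OmNull\setminus\interior\OmOne$ be a convex subset with $K \cap \partial\OmOne \ne \emptyset$. The key structural input is hyperplane separation applied to the disjoint convex sets $K$ and $\interior\OmOne$: it produces a supporting hyperplane $L$ to $\OmOne$ with $K \subset \cl(L^-)$ and $K \cap \OmOne \subset L \cap \OmOne$. By Lemma~\ref{CompactCap}, the face $L \cap \OmOne$ is compact. Consequently every segment $[a,b] \subset K$ meeting $\partial\OmOne$ either lies in the open half-space $L^-$ (where $\tilde G = G$ is concave by assumption) or lies entirely in the supporting hyperplane $L$, reducing the verification to a codimension one problem on $L \cap \OmNull$.

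\textbf{Main obstacle.} The delicate step is to check the concavity of $\tilde G$ along a segment $[a,b] \subset L$ that crosses a compact subsegment $[p,q] \subset L \cap \OmOne$ tangentially. One must match the $\limsup$ values defining $\tilde G$ at points of $[p,q]$ with the one-sided directional limits of $G$ approaching from $[a,p)$ and $(q,b]$. Compatibility follows from the local concavity of $G$ itself: for any two admissible approach directions $v_1,v_2$ to a boundary point $y \in \partial\OmOne$, the short segments $[y+tv_1,\,y+tv_2]$ lie inside $\OmNull\setminus\OmOne$ for small $t>0$ (thanks to the supporting hyperplane structure), so the concavity of $G$ along them passes, in the limit $t \to 0$, to a concavity relation among the directional limits $v \mapsto L(v)$. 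This, together with the compactness of $L \cap \OmOne$, excludes the oscillatory behavior that could obstruct the extension and forces $\tilde G$ to be locally concave along tangent segments. Making this argument rigorous in arbitrary dimension is the technically heaviest part of the proof.
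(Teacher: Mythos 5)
Your overall strategy --- extend any finite locally concave~$G$ on $\OmNull\setminus\OmOne$ to a locally concave function on $\OmNull\setminus\interior\OmOne$ and then take the infimum --- is the same as the paper's, and the easy direction is handled identically. However, the hard direction has a genuine gap at the point where you yourself flag the ``main obstacle,'' and the problem is already present in your choice of extension.

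You set $\tilde G(y) = \limsup_{z\to y}G(z)$ over \emph{all} approaches $z\in\OmNull\setminus\OmOne$. For $\tilde G$ to be locally concave, it must in particular be concave along any segment~$\ell$ with exactly one endpoint~$y$ on~$\partial\OmOne$ and the rest in~$\OmNull\setminus\OmOne$ (the paper's \emph{transversal} segments). Concavity of $G|_\ell$ together with the endpoint value being appended requires $\tilde G(y)\le \lim_{z\to y,\ z\in\ell}G(z)$; but your $\limsup$ includes tangential approaches and can exceed this one-sided limit, in which case concavity along~$\ell$ fails. Your verification also silently skips exactly this case: your dichotomy ``every segment meeting $\partial\OmOne$ either lies in $L^-$ or lies entirely in $L$'' overlooks segments that touch $\partial\OmOne$ at a single point with the rest strictly below $L$, which is the case that constrains $\tilde G$ at that point from above. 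The paper avoids both difficulties by defining the extension as the \emph{limit along transversal segments}, first proving \eqref{DefinitionOfExtension} --- that this limit is the same for every transversal direction --- and then, using the no-rays hypothesis to produce a tangent chord $[y,z]$ through any $x\in\partial\OmOne$ with $y,z\in\interior\OmNull\setminus\OmOne$, deducing that the limit is finite via \eqref{eq55}. The local concavity of the extension is then checked along exactly the two kinds of segments: those avoiding $\partial\OmOne$ (trivial) and those through a point of $\partial\OmOne$ (by \eqref{eq55}). You would need to replace your $\limsup$ by the transversal limit and prove its direction-independence and finiteness --- this is the substantive content that is missing.
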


\begin{figure}[h!]
\centerline{\includegraphics[height=8cm]{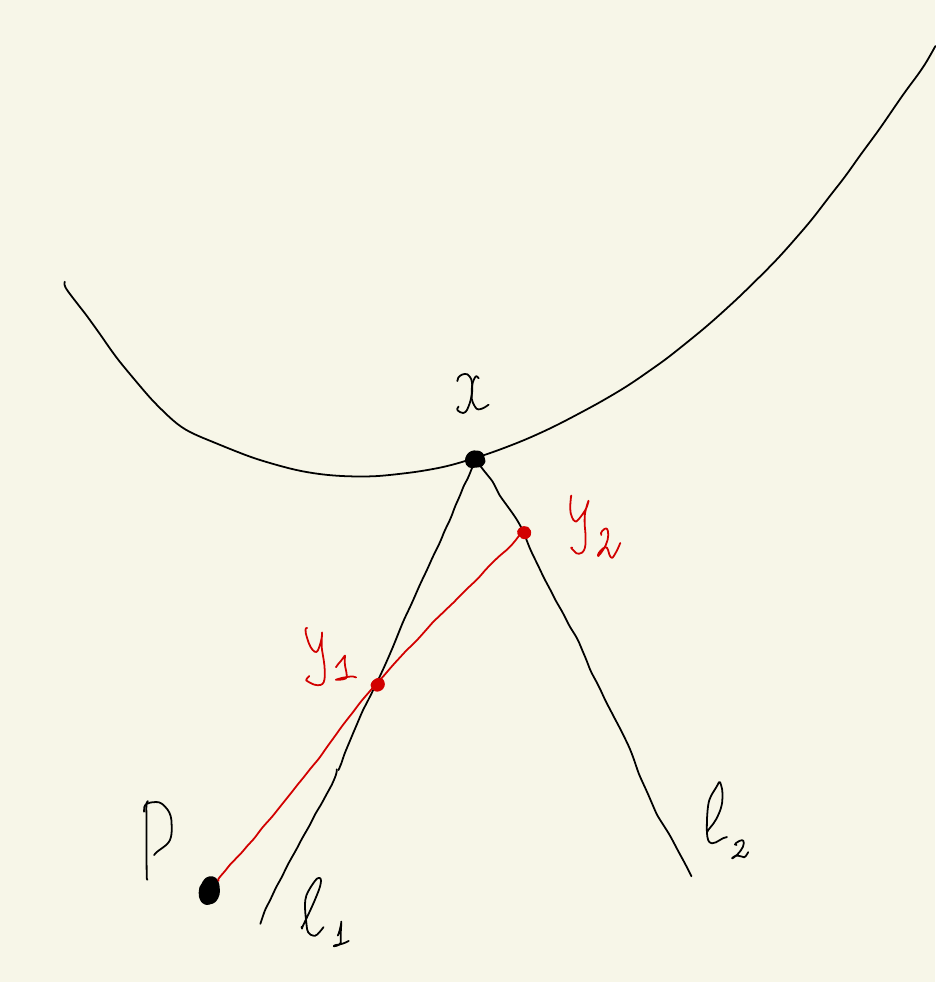}}
\caption{Illustration to the proof of Proposition~\ref{ContinuationProp}}
\label{Fig_TrS}
\end{figure}

\begin{proof} The proof is a minor modification of the proof of Proposition~$5.4$ in~\cite{SZ2022}. Let~$x$ be a point on the boundary of~$\OmOne$. Recall that a segment~$\ell \subset \Omega$ with the endpoint~$x$ is called transversal provided the continuation of~$\ell$ beyond~$x$ lies inside~$\OmOne$.  Let~$G$ be a finite locally concave function on~$\OmNull \setminus \OmOne$, let~$\ell_1$ and~$\ell_2$ be two transversal segments  with the endpoint $x$. We {\bf claim} that
\eq{\label{DefinitionOfExtension}
\lim\limits_{\genfrac{}{}{0pt}{-2}{y\to x}{y\in \ell_1}} G(y) = \lim\limits_{\genfrac{}{}{0pt}{-2}{y\to x}{y\in \ell_2}} G(y).
}
Note that both limits exist as limits of concave functions of single variables. Another issue is whether the limits are infinite; we will resolve it shortly.

To prove the claim, it suffices to justify the inequality
\eq{\label{LimitingInequality}
\lim\limits_{\genfrac{}{}{0pt}{-2}{y\to x}{y\in \ell_1}} G(y) \geq \lim\limits_{\genfrac{}{}{0pt}{-2}{y\to x}{y\in \ell_2}} G(y).
}
Consider the two-dimensional affine plane spanned by~$\ell_1$ and~$\ell_2$ and choose a point~$P$ as it is shown on Fig.~\ref{Fig_TrS}. Then,
\eq{
G(y_1) \geq \alpha G(y_2) + (1-\alpha)G(P),
}
for any~$y_1$ and~$y_2$ as on Fig.~\ref{Fig_TrS}. The coefficient~$\alpha$ tends to~$1$ as~$y_1 \to x$ along~$\ell_1$, which justifies~\eqref{LimitingInequality} and finishes the proof of the {\bf claim}.

Now we note that if the boundary of~$\OmOne$ does not contain rays, then for any~$x\in \partial \OmOne$ there exist~$y,z \in \interior\OmNull \setminus\OmOne$ such that~$x\in [y,z]$ and~$[y,z]\cap \interior \OmOne = \varnothing$. Let us show that the limit~\eqref{DefinitionOfExtension} is finite. Assume the transversal segment~$\ell_1$ is sufficiently short and consider the segments~$\ell_y$ and~$\ell_z$ that are its images under translations that map~$x$ to~$y$ and~$z$, correspondingly. Then, for any~$x_1 \in \ell_1$, we have
\eq{
G(x_1) \geq \frac{|y-x|}{|y-z|} G(z_1) + \frac{|z-x|}{|y-z|}G(y_1),
}
for some points~$y_1 \in \ell_y$ and~$z_1\in \ell_z$. This proves
\eq{\label{eq55}
\lim\limits_{\genfrac{}{}{0pt}{-2}{x_1\to x}{x_1\in \ell_1}} G(x_1) \geq \frac{|y-x|}{|y-z|} G(z) + \frac{|z-x|}{|y-z|}G(y).
}
In particular, the limit on the left is finite.

Now we are ready to prove~\eqref{FunctionAndItsExtension}. Note that
\eq{
\BG_{\OmNull\setminus \OmOne}(x) \geq \BG_{\OmNull\setminus \interior \OmOne}(x),\quad x\in \OmNull\setminus \OmOne.
}
To prove the reverse inequality, we extend the function~$\BG_{\OmNull\setminus  \OmOne}$ to the set~$\partial \OmOne$ using formula~\eqref{DefinitionOfExtension} (we plug this function for~$G$). It suffices to show the local concavity of the extension, which we denote by~$H$. We need to justify the inequality
\eq{
H(x) \geq \frac{|y-x|}{|y-z|} H(z) + \frac{|z-x|}{|y-z|}H(y),\qquad x\in [y,z]\subset \Om.
}
The inequality for arbitrary~$x,y,z$ follows from the cases where either~$[y,z]\subset \Om\setminus \partial \OmOne$ or~$x\in \partial\OmOne$, since concavity is a local notion. In the first case, the inequality is true since~$H|_{\OmNull\setminus \partial\OmOne}$ equals~$\BG_{\OmNull\setminus \OmOne}$. In the second case, the desired inequality is similar to~\eqref{eq55}. Thus,~$H$ is a locally concave function on the entire domain~$\OmNull\setminus \interior \OmOne$,~$H \geq \BG_{\OmNull\setminus \interior \OmOne}$, and~\eqref{FunctionAndItsExtension} holds true.
\end{proof}

\bibliography{/Users/mac/Documents/Bib/Mybib}{}
\bibliographystyle{plain}

Egor Dobronravov,

St. Petersburg State University,

yegordobronravov@mail.ru

\bigskip

Dmitriy Stolyarov,

St. Petersburg State University,

d.m.stolyarov@spbu.ru

\bigskip

Pavel Zatitskii,

University of Cincinnati,

zatitspl@ucmail.uc.edu

\end{document}